\newcommand*{\MRref}[2]{\href{http://www.ams.org/mathscinet-getitem?mr=#1}{MR \textbf{#1}}}
\newcommand*{\arxiv}[1]{\href{http://www.arxiv.org/abs/#1}{arXiv: #1}}
\theoremstyle{plain}
\newtheorem{theorem}{Theorem}
\newtheorem{lemma}[theorem]{Lemma}
\newtheorem{proposition}[theorem]{Proposition}
\newtheorem{corollary}[theorem]{Corollary}
\theoremstyle{definition}
\newtheorem{definition}[theorem]{Definition}
\newtheorem{notation}[theorem]{Notation}
\theoremstyle{remark}
\newtheorem{remark}[theorem]{Remark}
\newtheorem{example}[theorem]{Example}
\newcommand*{\C}{\mathbb C}
\newcommand*{\Z}{\mathbb Z}
\newcommand*{\R}{\mathbb R}
\newcommand*{\N}{\mathbb N}
\newcommand*{\Sphere}{\mathbb S}
\newcommand*{\UNIT}{\mathds 1} % unit object of a tensor category
\newcommand*{\dual}{\mathscr P}% the dual C*-algebra
\newcommand*{\KK}{\textup{KK}}
\newcommand*{\RKK}{\textup{RKK}}
\newcommand*{\cRKK}{\mathscr{R}\textup{KK}}
\newcommand*{\K}{\textup K}
\DeclareMathOperator{\Lef}{Lef}
\DeclareMathOperator{\Eul}{Eul}
\DeclareMathOperator{\Fix}{Fix}
\DeclareMathOperator{\Hom}{Hom}
\DeclareMathOperator{\Stab}{Stab}
\DeclareMathOperator{\sign}{sign}
\DeclareMathOperator{\supp}{supp}
\DeclareMathOperator{\ind}{ind}
\DeclareMathOperator{\tr}{tr}
\DeclareMathOperator{\Sd}{Sd}
\DeclareMathOperator{\Rep}{Rep}
\DeclareMathOperator{\Deg}{Deg}
\newcommand*{\PD}{\textup{PD}}
\newcommand*{\ID}{{\textup{id}}}
\newcommand*{\ev}{\textup{ev}}
\newcommand*{\op}{\textup{op}}
\newcommand*{\mult}{\textup{n}}
\newcommand*{\selfmap}{\psi} % the self-map whose Lefschetz invariant we compute
\newcommand*{\Selfmap}{\Psi} % induced map on the cellular chain complex
\newcommand*{\col}{\gamma}   % the colouring
\newcommand*{\Hils}{\mathscr{H}}
\newcommand*{\CONT}{\mathscr{C}}
\newcommand*{\Cliff}{\textup{Cliff}}
\newcommand{\deRham}{\textup{D}_{\textup{dR}}}
\newcommand*{\Comp}{\mathbb{K}}
\newcommand*{\nb}{\nobreakdash}
\newcommand*{\Cstar}{\texorpdfstring{$C^*$\nobreakdash-\hspace{0pt}}{C*-}}
\newcommand*{\Star}{\texorpdfstring{$^*$\nobreakdash-\hspace{0pt}}{*-}}
\newcommand*{\abs}[1]{\lvert#1\rvert}
\newcommand*{\norm}[1]{\lVert#1\rVert}
\newcommand*{\conj}[1]{\overline{#1}}
\newcommand*{\cross}{\mathbin{\rtimes}}
\newcommand*{\defeq}{\mathrel{:=}}
\newcommand*{\blank}{\textup{\textvisiblespace}}
\newcommand*{\prto}{\twoheadrightarrow}
\newcommand*{\no}{\underline{\textbf{n}}}
\newcommand*{\poset}{\mathcal{S}(\no)}
\begin{document}

\title[Equivariant Lefschetz maps]{Equivariant Lefschetz
  maps for simplicial complexes and smooth manifolds}

\author{Heath Emerson}
\email{hemerson@math.uvic.ca}

\address{Department of Mathematics and Statistics\\
  University of Victoria\\
  PO BOX 3045 STN CSC\\
  Victoria, B.C.\\
  Canada V8W 3P4}

\author{Ralf Meyer}
\email{rameyer@uni-math.gwdg.de}

\address{Mathematisches Institut\\
  Georg-August Universit\"at G\"ottingen\\
  Bunsenstra{\ss}e 3--5\\
  37073 G\"ottingen\\
  Germany}

\begin{abstract}
  Let~\(X\) be a locally compact space with a continuous proper
  action of a locally compact group~\(G\).  Assuming that~\(X\)
  satisfies a certain kind of duality in equivariant bivariant
  Kasparov theory, we can enrich the classical construction of
  Lefschetz numbers for self-maps to an equivariant
  K\nobreakdash-homology class.  We compute the Lefschetz
  invariants for self-maps of finite-dimensional simplicial
  complexes and smooth manifolds.  The resulting invariants are
  independent of the extra structure used to compute them.
  Since smooth manifolds can be triangulated, we get two
  formulas for the same Lefschetz invariant in this case.  The
  resulting identity is closely related to the equivariant
  Lefschetz Fixed Point Theorem of L\"uck and Rosenberg.
\end{abstract}

\subjclass[2000]{19K35, 46L80}

\thanks{This research was supported by the National Science and
  Engineering Research Council of Canada Discovery Grant
  program and the Marie Curie Action \emph{Noncommutative
    Geometry and Quantum Groups} (Contract
  MKTD-CT-2004-509794).}

\maketitle

\section{Introduction}
\label{sec:intro}

Euler characteristics and Lefschetz numbers of self-maps are
important objects in algebraic topology.  They can be refined
for spaces with a group action.  A purely topological approach
to such \emph{equivariant} Lefschetz invariants has been
developed by Wolfgang L\"uck and Julia Weber
\cites{Lueck-Rosenberg:Lefschetz, Weber:Universal_Lefschetz}.
This article grew out of the authors' previous work on
equivariant Euler characteristics
in~\cite{Emerson-Meyer:Euler}.  The applications
in~\cite{Emerson-Meyer:Euler} dictated studying Euler
characteristics in the setting of bivariant Kasparov theory.
Here we extend the framework of~\cite{Emerson-Meyer:Euler} to
produce Lefschetz invariants as well.

The main ingredient in our definition of the Lefschetz
invariant is a certain duality in bivariant Kasparov theory.
Let~\(X\) be a locally compact space and let~\(G\) be a locally
compact group acting on~\(X\).  A \(G\)\nb-equivariant
\emph{abstract dual} for~\(X\) is a pair \((\dual,\Theta)\)
consisting of a \(G\)\nb-\Cstar{}algebra~\(\dual\) and a
class \(\Theta\in\RKK^G_n(X;\C,\dual)\) for some \(n\in\N\)
such that Kasparov product with~\(\Theta\) induces isomorphisms
\begin{equation}
  \label{eq:abstractduality}
  \KK^G_*(\dual\otimes A, B) \cong \RKK^G_{*+n}(X; A,B)
\end{equation}
for all \(G\)\nb-\Cstar{}algebras \(A\) and~\(B\); the groups
\(\KK^G_*(\dual\otimes A, B)\) and \(\RKK^G_{*+n}(X; A,B)\)
appearing here are defined by Gennadi Kasparov
in~\cite{Kasparov:Novikov}.

Given an abstract dual for~\(X\), we define an
\emph{equivariant Euler characteristic of~\(X\)} in
\(\KK^G_0(\CONT_0(X),\C)\) and an \emph{equivariant Lefschetz
  map}
\[
\Lef\colon
\RKK^G_*(X; \CONT_0(X),\C) \to \KK^G_*(\CONT_0(X),\C).
\]
The construction of these maps is explained in the body of the
paper, see also~\cite{Emerson-Meyer:Euler}.  The equivariant
Euler characteristic is already studied
in~\cite{Emerson-Meyer:Euler}, the Lefschetz map is studied
in~\cite{Emerson-Meyer:Dualities}, even in the more general
setting of groupoid actions, which includes relative Euler
characteristics and Lefschetz maps for bundles of spaces.  Here
we only consider the simpler case of a single space with a
group action.

In order to actually compute the invariants, it is useful to
have a bit more structure, which is formalised in the notion of
a Kasparov dual in \cites{Emerson-Meyer:Euler,
  Emerson-Meyer:Dualities}.

If~\(X\) is compact, then
\[
\RKK^G_*(X;\CONT_0(X),\C) \cong \KK^G_*(\CONT(X),\CONT(X))
\]
because both groups are defined by the same cycles.  Hence the
domain of the equivariant Lefschetz map consists of morphisms
from~\(X\) to~\(X\) in an appropriate category in this case.
In general, there is a canonical map
\[
\KK^G_*(\CONT_0(X),\CONT_0(X)) \to \RKK^G_*(X;\CONT_0(X),\C)
\]
(see \cite{Emerson-Meyer:Dualities}*{\S4.1.4}).  Roughly
speaking, the group on the right hand side also contains
self-maps of \(\CONT_0(X)\) that are not proper.  In
particular, any \(G\)\nb-equivariant continuous map
\(\selfmap\colon X\to X\) yields a class in
\(\RKK^G_0(X;\CONT_0(X),\C)\), to which we can apply the
Lefschetz map to get \(\Lef(\selfmap)\in
\KK^G_0(\CONT_0(X),\C)\).

We are going to compute \(\Lef(\selfmap)\) in two important
cases where a Kasparov dual is available -- for simplicial
complexes and smooth manifolds -- and compare the results.  An
important point is that \(\Lef(\selfmap)\) does not depend on
the dual that we use to compute it.  Since any smooth manifold
can be triangulated, we therefore get two formulas for
\(\Lef(\selfmap)\).  The equality of these two formulas amounts
to a Lefschetz Fixed Point Theorem in \(\KK^G\).  The result is
comparable to the Equivariant Lefschetz Fixed Point Theorem of
Wolfgang L\"uck and Jonathan Rosenberg
in~\cite{Lueck-Rosenberg:Lefschetz}.  However, there are some
differences.  On the one hand, their invariant is finer than
ours.  On the other hand, we are able to weaken the standard
transversality assumption on~\(\selfmap\) to deal with the case
where the fixed point set of~\(\selfmap\) has strictly positive
dimension.  Instead of orientation data at a fixed point, we
get a contribution of the Euler characteristic of the fixed
point set.  In the transverse case, we describe the local
orientation data in a different way than L\"uck and Rosenberg.
The local data for us is an element of the representation ring
of the isotropy subgroup of the fixed point.  It can be
described using Clifford algebras and their representation
theory.  Finally, our methods allow the group~\(G\) to be
non-discrete.

Lefschetz invariants exist for general elements of
\(\KK^G_*(\CONT_0(X),\CONT_0(X))\), not just for maps.  We will
address this kind of generalisation in a future article.  As in
the Euler characteristic computations
in~\cite{Emerson-Meyer:Euler}, in the analysis presented here,
\(\Lef(\selfmap)\) appears as a linear combination of point
evaluation classes, at least in the simplicial case.  In this
sense, the Lefschetz invariant of a \emph{map} is always a
\(0\)\nb-dimensional object.  This changes when we apply the
Lefschetz map to more general elements in
\(\RKK^G_*(X;\CONT_0(X),\C)\): on that domain, the Lefschetz
map is split surjective by
\cite{Emerson-Meyer:Dualities}*{Proposition 4.26}, so that the
Lefschetz map yields arbitrarily complicated \(\K\)\nb-homology
classes.  Of course, this also makes computations more
difficult.  In another forthcoming article, we shall compute
Lefschetz invariants of geometric cycles or correspondences as
defined by Paul Baum and his coauthors
(\cites{Baum-Block:Bicycles, Baum-Douglas:K-homology}) and by
Alain Connes and Georges
Skandalis~\cite{Connes-Skandalis:Longitudinal}.  But these
computations use a different Kasparov dual in order to express
everything in terms of geometric cycles.  The computations here
are closer to those in~\cite{Emerson-Meyer:Euler}.

There are some similarities between the results here and the
Lefschetz Fixed Point Formula
in~\cite{Echterhoff-Emerson-Kim:Fixed}, which is also based on
the method of Poincar\'e duality.  However, each deal with
different situations: in~\cite{Echterhoff-Emerson-Kim:Fixed}
the trace of certain automorphisms of the crossed product
\(\CONT_0(X)\ltimes G\) on \(\K_*(\CONT_0(X)\ltimes G)\) is
computed.  Our approach here only deals with special
automorphisms that act identically on~\(G\); but it yields a
refined invariant in \(\KK^G_0(\CONT_0(X),\C)\) instead of just
a number.

The contents of this article is as follows.  In
Section~\ref{sec:duals_Lefschetz}, we briefly recall the notion
of a Kasparov dual and define the Lefschetz map.
Section~\ref{sec:main_results} contains our three main results.
Theorem~\ref{the:Lef_combinatorial} provides a formula for
\(\Lef(\selfmap)\) in the simplicial case and
Theorem~\ref{the:Lef_smooth} in the smooth case;
Theorem~\ref{the:Lef_Fixed_Point} combines them into a
Lefschetz Fixed Point Theorem.  The proofs appear in
Section~\ref{sec:combinatorial_proof} for the simplicial case
and in Section~\ref{sec:smooth_proof} for the smooth case.

\section{Kasparov duality and the Lefschetz map}
\label{sec:duals_Lefschetz}

The general framework of duals and Lefschetz maps is explained
carefully in~\cite{Emerson-Meyer:Dualities}.  Here we only
recall the definition of a Kasparov dual for a
\(G\)\nb-space~\(X\), where~\(X\) is a locally compact space
and~\(G\) a locally compact group.  As
in~\cite{Emerson-Meyer:Dualities}, we write \(\UNIT=\C\).  If
we worked with real \Cstar{}algebras, we would use~\(\R\)
instead.

A Kasparov dual for~\(X\) consists of an \(X\cross
G\)-\(C^*\)-algebra~\(\dual\) and two classes
\[
\Theta\in\RKK^G_n(X;\UNIT,\dual),
\qquad
D\in\KK^G_{-n}(\dual,\UNIT),
\]
satisfying some conditions (see
\cite{Emerson-Meyer:Dualities}*{Definition 4.1}).  These ensure
that the maps
\begin{alignat}{2}
  \label{eq:def_PD}
  \PD&\colon \KK^G_{i-n}(A\otimes\dual,B) \to
  \RKK^G_i(X;A,B),
  &\qquad
  f&\mapsto \Theta \otimes_\dual f,\\
  \label{eq:def_PDstar}
  \PD^*&\colon \RKK^G_i(X;A,B) \to
  \KK^G_{i-n}(A\otimes\dual,B),
  &\qquad
  g&\mapsto (-1)^{ni} T_\dual(g) \otimes_\dual D,
\end{alignat}
are inverse to each other.  Here the map~\(T_\dual\) combines
three maps
\begin{multline*}
  \RKK^G_*(X;A,B) =
  \cRKK^G_*\bigl(X;\CONT_0(X,A),\CONT_0(X,B)\bigr)
  \\\to \cRKK^G_*\bigl(X\times X;\CONT_0(X,A)\otimes\dual,
  \CONT_0(X,B)\otimes\dual\bigr)
  \\\to \cRKK^G_*(X;A\otimes\dual, B\otimes\dual)
  \to \KK^G_*(A\otimes\dual, B\otimes\dual);
\end{multline*}
the first map is the exterior product, the second one is
induced by the diagonal embedding \(X\to X\times X\), and the
third one is the forgetful map.  In our computations, we will
only use the following special case:

\begin{lemma}
  \label{lem:compute_T_on_map}
  Let \(\selfmap\colon X\to X\) be a \(G\)\nb-equivariant map.
  As in \cite{Emerson-Meyer:Dualities}*{Example 4.19}, let
  \(\selfmap^*(\Delta_X)\) be the class in
  \(\RKK^G_0(X;\CONT_0(X),\UNIT)\) of the \Star{}homomorphism
  induced by the map \(X\to X\times X\), \(x\mapsto
  \bigl(x,\selfmap(x)\bigr)\).  Then
  \(T_\dual(\selfmap^*\Delta_X)\in \KK^G_0(
  \CONT_0(X)\otimes\dual, \dual)\) is represented by the
  \(G\)\nb-equivariant \Star{}homomorphism
  \[
  \mu_\selfmap\colon \CONT_0(X)\otimes\dual \to\dual,
  \qquad
  f\otimes a \mapsto (f\circ \selfmap)\cdot a,
  \]
  where the multiplication uses the \(X\)\nb-structure
  on~\(\dual\).
\end{lemma}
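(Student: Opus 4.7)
The plan is to trace the class $\selfmap^*(\Delta_X)$ through each of the three operations constituting $T_\dual$. Since $\selfmap^*(\Delta_X)$ is represented by the \Star{}homomorphism $\Delta_\selfmap\colon \CONT_0(X\times X) \to \CONT_0(X)$, $f_1 \otimes f_2 \mapsto f_1\cdot (f_2\circ\selfmap)$, each of the three operations can be performed at the level of \Star{}homomorphisms and tracked elementwise.

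First, I would form the exterior product with $[\ID_\dual]$: this is simply $\Delta_\selfmap\otimes\ID_\dual\colon \CONT_0(X\times X)\otimes\dual \to \CONT_0(X)\otimes\dual$, sending $f_1\otimes f_2\otimes a$ to $f_1\cdot(f_2\circ\selfmap)\otimes a$. The two $X$-structures on each of these algebras are provided by the first tensor factor of $\CONT_0(X\times X)$ (the spectator $\CONT_0(X)$) and by the $X$-algebra structure on~$\dual$; the middle $\CONT_0(X)$ on the left is only a coefficient algebra.

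Next, I would pull back along the diagonal $\Delta\colon X\hookrightarrow X\times X$. Since this pullback identifies the two $\CONT_0(X)$-actions on an $X\times X$-algebra, a direct computation shows that the pullback of $\CONT_0(X\times X)\otimes\dual$ is $\CONT_0(X)\otimes\dual$ (carrying the $X$-structure inherited from $\dual$), presented by the surjection $\CONT_0(X)\otimes\CONT_0(X)\otimes\dual \to \CONT_0(X)\otimes\dual$, $f_1\otimes f_2\otimes a\mapsto f_2\otimes(f_1\cdot a)$; and the pullback of $\CONT_0(X)\otimes\dual$ is $\dual$, presented by $\CONT_0(X)\otimes\dual\to\dual$, $f\otimes a\mapsto f\cdot a$. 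The final forgetful step leaves the underlying \Star{}homomorphism unchanged. It then remains to identify the unique induced \Star{}homomorphism $\CONT_0(X)\otimes\dual\to\dual$ fitting in the resulting commutative square, and a short diagram chase shows that both composites send $f_1\otimes f_2\otimes a$ to $f_1\cdot(f_2\circ\selfmap)\cdot a$; hence the induced map is $f\otimes a\mapsto(f\circ\selfmap)\cdot a=\mu_\selfmap(f\otimes a)$, as claimed.

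The main obstacle is purely bookkeeping: correctly keeping track of which tensor factor carries which $X$-structure at each step, and presenting the two diagonal pullbacks explicitly. Once that is in hand, no nontrivial bivariant K-theoretic manipulations are required, because the computation descends throughout to an elementary diagram chase of \Star{}homomorphisms.
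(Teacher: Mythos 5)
Your proof is correct and is exactly the unwinding of the definition of $T_\dual$ that the paper is alluding to when it says the proof is straightforward: you form the exterior product with $\ID_\dual$, restrict along the diagonal (correctly identifying the balanced tensor products $\CONT_0(X)\otimes\CONT_0(X)\otimes\dual \to \CONT_0(X)\otimes\dual$, $f_1\otimes f_2\otimes a\mapsto f_2\otimes f_1\cdot a$, and $\CONT_0(X)\otimes\dual\to\dual$, $f\otimes a\mapsto f\cdot a$), apply the forgetful map, and read off the induced map on quotients from the resulting commuting square. There is no gap; this is essentially the only natural argument.
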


\begin{proof}
  The proof is straightforward.
\end{proof}

The \emph{Lefschetz map} for~\(X\) is the composition
\begin{multline}
  \label{lefmap}
  \Lef\colon \RKK^G_*(X;\CONT_0(X),\UNIT)
  \xrightarrow{\PD^*}
  \KK^G_{*+n}(\CONT_0(X)\otimes\dual, \UNIT)
  \\\xrightarrow{\overline{\Theta}
    \otimes_{\CONT_0(X)\otimes\dual}\blank}
  \KK^G_*(\CONT_0(X), \UNIT).
\end{multline}
Here \(\overline{\Theta}\) is obtained from~\(\Theta\) by
applying the forgetful map
\[
\RKK^G_n(X;\UNIT,\dual) \to
\KK^G_n(\CONT_0(X),\CONT_0(X)\otimes\dual).
\]

Given a \(G\)\nb-equivariant map \(f\colon X\to X\), we
define
\[
\Lef(\selfmap) \defeq \Lef(\selfmap^*\Delta_X),
\]
where we use \(\selfmap^*\Delta_X\in
\RKK^G_0(X;\CONT_0(X),\UNIT)\) as defined in
Lemma~\ref{lem:compute_T_on_map}.  Plugging in
\eqref{eq:def_PDstar} and Lemma~\ref{lem:compute_T_on_map},
this becomes
\begin{equation}
  \label{eq:compute_Lef}
  \Lef(\selfmap)
  = \overline{\Theta} \otimes_{\CONT_0(X)\otimes\dual}
  T_\dual(\selfmap^*\Delta_X)\otimes_{\dual} D
  = \overline{\Theta} \otimes_{\CONT_0(X)\otimes\dual}
  [\mu_\selfmap]\otimes_\dual D.
\end{equation}
Since the class of \(\selfmap^*\Delta_X\) in
\(\RKK^G_0(X;\CONT_0(X),\UNIT)\) depends only on the
equivariant homotopy class of~\(\selfmap\), so does
\(\Lef(\selfmap)\).  The Lefschetz invariant of the identity
map is the \emph{equivariant Euler characteristic} of~\(X\):
\begin{equation}
  \label{eq:Euler_as_Lef}
  \Eul_X \defeq \Lef(\ID_X) \in \KK^G_0(\CONT_0(X), \UNIT).  
\end{equation}
This agrees with the definition is~\cite{Emerson-Meyer:Euler}.

All choices of a Kasparov dual \((\dual,\Theta,D)\) yield the
same Lefschetz map.  This is shown
in~\cite{Emerson-Meyer:Euler} for the equivariant Euler
characteristic; as observed in~\cite{Emerson-Meyer:Dualities},
the proof generalises to the Lefschetz map.  We will use this
fact when~\(X\) is, at the same time, a smooth Riemannian
manifold with~\(G\) acting isometrically and a simplicial
complex with~\(G\) acting simplicially.  Both the smooth and
the combinatorial structure provide Kasparov duals for~\(X\).
These two duals yield different formulas for
\(\Lef(\selfmap)\).  But they must produce the same class in
\(\KK^G_0(\CONT_0(X),\UNIT)\).  The equality between both
results is our equivariant Lefschetz formula.

\section{Statement of the main results}
\label{sec:main_results}

We first consider the simplest possible case.  We let~\(G\) be
trivial and let~\(X\) be a connected compact manifold equipped
with a triangulation.  Let \(C_\bullet(X)\) be the resulting
simplicial chain complex.  Let \(\selfmap\colon X\to X\) be a
self-map.

We can find a smooth map \(\tilde\selfmap\colon X\to X\) that is
homotopic to~\(\selfmap\) and whose graph is transverse to the
diagonal.  When we use the Kasparov dual from the smooth
structure on~\(X\), we get
\[
\Lef(\selfmap) = \Lef(\tilde\selfmap) =
\sum_{p\in \Fix(\tilde\selfmap)} \sign \det
\bigl(\ID_{T_pX}-D_p\tilde\selfmap\bigr) \cdot [\ev],
\]
where \(D_p\tilde\selfmap\) denotes the derivative
of~\(\tilde\selfmap\) at~\(p\) and \([\ev]\in \K_0(X)\) denotes
the class of point evaluations.  Since~\(X\) is connected and
\(\KK\)-theory is homotopy invariant, all point evaluations
have the same \(\K\)\nb-homology class.  The numerical factor
in front of \([\ev]\) is the usual local formula for the
Lefschetz number of~\(\tilde\selfmap\) in terms of fixed
points.

The triangulation provides another Kasparov dual for~\(X\).
This dual yields
\[
\Lef(\selfmap) = \sum_{d=0}^{\dim X} (-1)^d \tr
\bigl(\Selfmap_d \colon C_d(X) \to C_d(X)\bigr)\cdot [\ev],
\]
where \(\Selfmap_\bullet\colon C_\bullet(X) \to C_\bullet(X)\)
is the chain map induced by~\(\selfmap\) (or a cellular
approximation of~\(\selfmap\)).  A familiar trick replaces the
spaces of cycles \(C_d(X)\) by the homology spaces
\(\textup{H}_d(X)\) in the last formula, expressing the
Lefschetz number as a global homological invariant.

The equivariant generalisation of our Lefschetz formulas
requires some preliminary notation.  First we explain the
formula for the combinatorial case, then for the smooth case.

\subsection{The combinatorial Lefschetz map}
\label{sec:combinatorial_case}

Let~\(X\) be a finite-dimensional simplicial complex and
let~\(G\) be a locally compact group acting smoothly and
simplicially on~\(X\) (that is, stabilisers of points are
open).  The finite-dimensionality assumption is needed for the
construction of a Kasparov dual in~\cite{Emerson-Meyer:Euler}
to work.  It can probably be dropped if we use a more
sophisticated dual.  Assume that~\(X\) admits a colouring (that
is, \(X\) is typed) and that~\(G\) preserves the colouring.
This ensures that if \(g\in G\) maps a simplex to itself, then
it fixes that simplex pointwise.

Let \(SX\) be the set of (non-degenerate) simplices of~\(X\)
and let \(S_dX\subseteq SX\) be the subset of
\(d\)\nb-dimensional simplices.  The group~\(G\) acts smoothly
on the discrete set~\(SX\) preserving the decomposition
\(SX=\bigsqcup S_d X\).  Decompose~\(SX\) into \(G\)\nb-orbits.
For each orbit \(\dot\sigma \subseteq SX\), choose a
representative \(\sigma\in SX\) and let \(\xi_\sigma\in X\) be
its barycentre and \(\Stab(\sigma)\subseteq G\) its stabiliser.
Restriction to the orbit of~\(\xi_\sigma\) defines a
\(G\)\nb-equivariant \Star{}homomorphism
\begin{equation}
  \label{eq:dot_xi}
  \xi_{\dot\sigma}\colon
  \CONT_0(X)\to\CONT_0\bigl(G/\Stab(\sigma)\bigr) \to
  \Comp\bigl(\ell^2(G/\Stab\sigma)\bigr),
\end{equation}
where the second map is the representation by pointwise
multiplication operators.  We let \([\xi_{\dot\sigma}]\) be its
class in \(\KK^G_0(\CONT_0(X),\UNIT)\).

Let \(\selfmap\colon X\to X\) be a \(G\)\nb-equivariant
self-map of~\(X\).  It is too restrictive to
assume~\(\selfmap\) to be simplicial -- simplicial
approximation requires us to refine the triangulations on
domain and target independently, so that we may need two
different triangulations of~\(X\) for a self-map.  What we can
achieve is that~\(\selfmap\) becomes a cellular map with
respect to the canonical CW-complex structure on a simplicial
complex.

More precisely, \(\selfmap\) is \(G\)\nb-equivariantly
homotopic to a \(G\)\nb-equivariant cellular map.  Hence we may
assume without loss of generality that~\(\selfmap\) is itself
cellular, so that it induces a \(G\)\nb-equivariant chain map
\[
\Selfmap\colon C_\bullet(X)\to C_\bullet(X),
\]
where \(C_\bullet (X)\) is the cellular chain complex of~\(X\);
this is nothing but the chain complex of \emph{oriented}
simplices of~\(X\).  A basis for \(C_\bullet (X)\) is given by
the set of (un)oriented simplices, by arbitrarily choosing an
orientation on each simplex.  We may describe the chain
map~\(\Selfmap\) by its matrix coefficients
\(\Selfmap_{\sigma\tau}\in\Z\) with respect to this basis; thus
the subscripts are unoriented simplices.  For example,
if~\(\selfmap\) maps a simplex to itself and reverses its
orientation, then \(\Selfmap_{\sigma,\sigma}=-1\).
Since~\(\selfmap\) is \(G\)\nb-equivariant,
\(\Selfmap_{g(\sigma),g(\sigma)} = \Selfmap_{\sigma\sigma}\).
So the following makes sense.

\begin{notation}
  \label{note:mult_gamma}
  For \(\dot{\sigma} \in G\backslash S_dX\), let
  \(\mult(\Selfmap,\dot\sigma) \defeq
  (-1)^d\Selfmap_{\sigma\sigma} \in \Z\) for any choice of
  representative \(\sigma \in \dot{\sigma}\).
\end{notation}

\begin{theorem}
  \label{the:Lef_combinatorial}
  Let~\(X\) be a finite-dimensional coloured simplicial
  complex and let~\(G\) be a locally compact group that acts
  smoothly and simplicially on~\(X\), preserving the colouring.
  Let \(\selfmap\colon X\to X\) be a \(G\)\nb-equivariant
  self-map.  Define \(\mult(\Selfmap,\dot\sigma)\in\Z\) and
  \([\xi_{\dot\sigma}]\in \KK^G_0(\CONT_0(X),\UNIT)\) for
  \(\dot\sigma\in G\backslash SX\) as above.  Then
  \[
  \Lef(\selfmap) = \sum_{\dot\sigma\in G\backslash SX}
  \mult(\Selfmap,\dot\sigma)[\xi_{\dot\sigma}].
  \]
\end{theorem}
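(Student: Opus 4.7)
The plan is to apply the general formula~\eqref{eq:compute_Lef} for $\Lef(\selfmap)$ using the specific simplicial Kasparov dual $(\dual,\Theta,D)$ constructed in~\cite{Emerson-Meyer:Euler}, whose underlying algebra decomposes locally according to the simplicial structure: over the open star of each simplex~$\sigma$, the algebra~$\dual$ contributes a piece supported on the dual cell of~$\sigma$, and the class~$D$ is built from Dirac-type index classes on these dual cells (whose codimensions in~$X$ equal $\dim\sigma$). Before computing, reduce to the case where $\selfmap$ is a $G$-equivariant cellular map, using that $\Lef$ depends only on the equivariant homotopy class of~$\selfmap$; this makes the induced chain map $\Selfmap\colon C_\bullet(X)\to C_\bullet(X)$ and its matrix coefficients $\Selfmap_{\sigma\tau}$ available.

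Next, by Lemma~\ref{lem:compute_T_on_map}, $\mu_\selfmap\colon \CONT_0(X)\otimes\dual\to\dual$ is given by $f\otimes a\mapsto (f\circ\selfmap)\cdot a$. Because~$\dual$ carries an $X$-structure supported cell-by-cell, the Kasparov product $[\mu_\selfmap]\otimes_\dual D$ splits, $G$-equivariantly, into pieces indexed by simplices. The local piece for~$\sigma$ vanishes unless the support of the corresponding piece of~$\dual$ meets the pullback of its own dual cell under~$\selfmap$, so that cellularity of~$\selfmap$ forces the surviving contributions to be diagonal (i.e.\ $\tau=\sigma$). The local index computation on the dual cell of~$\sigma$ then produces the scalar $\Selfmap_{\sigma\sigma}$ together with a sign $(-1)^{\dim\sigma}$ arising from the codimension of the dual cell, or equivalently from the parity of the Clifford module used to build~$D$. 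Pairing with~$\overline{\Theta}$, which restricts the relevant barycentric partition of unity to~$\sigma$, rewrites this contribution as an equivariant point-evaluation class at the barycentre~$\xi_\sigma$. Finally, $G$-equivariance makes the contributions of simplices within one orbit agree under the identifications provided by the group action, and summing them collects into the single orbit-class $[\xi_{\dot\sigma}]$, producing the stated formula.

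The main obstacle is the careful bookkeeping: first, verifying that off-diagonal matrix coefficients $\Selfmap_{\sigma\tau}$ with $\tau\neq\sigma$ contribute zero, which is essentially a locality statement asserting that the Dirac-type class on the dual cell of~$\tau$ cannot be detected at a point outside that cell; second, extracting the sign $(-1)^{\dim\sigma}$ correctly from the dimension shifts in~$\Theta$ and~$D$. The argument is structurally parallel to the equivariant Euler characteristic computation of~\cite{Emerson-Meyer:Euler}, namely the special case $\selfmap=\ID_X$ where all diagonal coefficients are~$1$; the present generalisation consists in keeping track of the integers $\Selfmap_{\sigma\sigma}$ introduced by a non-identity cellular self-map.
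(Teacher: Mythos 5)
Your high-level strategy -- reduce to a cellular (in fact simplicial-after-subdivision) self-map, plug the combinatorial dual into \eqref{eq:compute_Lef}, show that the resulting cycle is diagonal over simplices, extract a local multiplicity for each orbit, and observe the parallel to the Euler-characteristic computation -- matches the paper's plan. However, several of the load-bearing details are misdescribed, and the crucial local computation is absent.

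First, your description of the combinatorial dual is not what the paper uses. The dual~$\dual$ from \cite{Emerson-Meyer:Euler} is not built from dual cells of~$X$, nor is~$D$ a Dirac-type class on dual cells; there are no Clifford modules or Dirac operators anywhere in the combinatorial picture. Instead, $\dual$ is a subalgebra of $\CONT_0(E,\Comp(\ell^2 SX))$ for an abstract affine space~$E$, cut out by support conditions in terms of the colouring (the regions $R_{\le f}$), and~$D$ is simply $[i]\otimes_{\CONT_0(E)}\beta_E$, the inclusion composed with the Bott class. The diagonal reduction is then not a ``locality of Dirac classes on dual cells'' argument but a concrete analysis of how the support conditions on~$\dual$, the regions $CR_f$, the map~$v$ to rank-one projections, and cellularity of~$\selfmap$ interact (the paper's Lemma~\ref{lem:simplices_in_xi}).

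Second, and more seriously, you attribute the sign $(-1)^{\dim\sigma}$ to ``dimension shifts in~$\Theta$ and~$D$'' or ``the parity of the Clifford module used to build~$D$.'' Neither explanation is correct. The degree shifts in $\Theta$ and $D$ are $\mp n$ (the dimension of the ambient affine space, fixed across all simplices) and cancel; they cannot produce a simplex-dependent sign $(-1)^{\dim\sigma}$. The actual source of the sign -- and of the full integer $\Selfmap_{\sigma\sigma}$ -- is a fixed-point analysis near the barycentre: after localising at~$\sigma$, one obtains a sum over simplices~$\tau_j$ in the subdivision that $\selfmap$ maps onto~$\sigma$, each giving an affine expansive self-map of a $d$-dimensional piece. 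Each contributes $\pm 1$ according to $\sign\det(\ID - A_0) = \sign\det(-A_0) = (-1)^d\sign\det(A_0)$, and summing with orientations gives $(-1)^d\Selfmap_{\sigma\sigma}$. This contraction/fixed-point argument is the heart of the combinatorial computation, and it is missing from your proposal. Without it, there is no actual mechanism extracting the multiplicity from the Kasparov product, and the sign claim is left unjustified.
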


Theorem~\ref{the:Lef_combinatorial} will be proved in
\S\ref{sec:combinatorial_proof}, where we also recall the
Kasparov dual for~\(X\).

\subsection{The \texorpdfstring{$\K$}{K}-orientation of a
  vector bundle automorphism}
\label{sec:orient_vb_auto}

Here we prepare for the Lefschetz formula for self-maps of
manifolds.  We need an invariant for vector bundle
automorphisms that generalises the sign of the determinant of a
linear map \(A\colon V\to V\) on a single vector space.  This
is a refinement of a construction which appears
in~\cite{Echterhoff-Emerson-Kim:Fixed} in connection with the
equivariant index of twisted Schr\"odinger operators.  If~\(G\)
is a group of orthogonal transformations of~\(\R^n\) and if
\(A\in\textup{Gl}_n(\R)\) commutes with~\(G\), then a
refinement of the sign of the determinant of~\(A\) is the
virtual group character
\begin{equation}
  \label{indexcharacter}
  \chi_{G,A}\colon G \to \{\pm1\},
  \qquad
  \chi_{G,A}(g) \defeq \sign \det (A|_{\Fix(g)}).
\end{equation}
The virtual representation of~\(G\) corresponding to
\(\chi_{G,A}\) is the solution to an equivariant index problem
on~\(\R^n\).  Here we take a different approach to the same
invariant, which permits a generalisation of it to equivariant
bundles. In the case where~\(Y\) is a point, the equivalence
between the picture in~\cite{Echterhoff-Emerson-Kim:Fixed} and
the one presented here follows from the fact that both give a
solution to the same equivariant index problem.

A very similar invariant to ours appears in the work of L\"uck
and Rosenberg, but the connection to index theory and to
representation theory seems absent.

We consider the following situation.  Let~\(Y\) be a locally
compact space, let~\(G\) be a locally compact group (or
groupoid) acting continuously on~\(Y\), and let \(\pi\colon
E\to Y\) be a \(G\)\nb-equivariant Euclidean \(\R\)\nb-vector
bundle over~\(E\), that is, \(E\) comes equipped with a
\(G\)\nb-invariant inner product on its fibres.  Let \(A\colon
E\to E\) be a \(G\)\nb-equivariant vector bundle automorphism,
that is, a \(G\)\nb-equivariant continuous map \(E\to E\)
over~\(Y\) that restricts to \(\R\)\nb-vector space
isomorphisms on the fibres of~\(E\).  We are going to define a
\(G\)\nb-equivariant \(\Z/2\)-graded real line bundle
\(\sign(A)\) over~\(Y\).

Since we work with complex \(\K\)\nb-theory most of the time,
we are mainly interested in the complexification
\(\sign(A)\otimes_\R\C\).  Nevertheless, it is worth noting
that the line bundles we get are complexifications of real line
bundles.

If~\(Y\) is a point, then \(G\)\nb-equivariant \(\Z/2\)-graded
real vector bundles over~\(Y\) correspond to real orthogonal
virtual representations of~\(G\).  The initial data is thus a
real orthogonal representation of~\(G\) on a Euclidean
space~\(\R^n\) and an invertible linear map \(A\colon
\R^n\to\R^n\) commuting with~\(G\).  The sign is a virtual
\(1\)\nb-dimensional representation of~\(G\) and hence
corresponds to a pair \((\chi,\varepsilon)\), where
\(\varepsilon\in\{0,1\}\) is the parity of the line bundle and
\(\chi\colon G\to \{-1,+1\}\) is a real-valued character.  The
parity~\(\varepsilon\) turns out to be~\(0\) if~\(A\) preserves
orientation and~\(1\) if~\(A\) reverses orientation (see
Example~\ref{exa:orientation}).  In this sense, our invariant
refines the orientation of~\(A\).  As it happens, if the parity
is even, then \(\chi = \chi_{G,A}\) as
in~\eqref{indexcharacter}, and if the parity is odd, then
\(\chi = -\chi_{G,A}\). In particular, evaluating \(\chi_{G,
  A}\) at the identity of~\(G\) yields the parity.

\begin{notation}
  \label{note:Cliff}
  Let \(\Cliff(E)\) be the bundle of Clifford algebras
  associated to~\(E\); its fibre \(\Cliff(E)_x\) is the (real)
  Clifford algebra of~\(E_x\) with respect to the given inner
  product.
\end{notation}

We can also form \(\Cliff(E)\) if~\(E\) carries an indefinite
bilinear form.  If the index of the bilinear form on~\(E\) is
divisible by~\(8\), then the fibres of \(\Cliff(E)\) are
isomorphic to matrix algebras.  In this case, a
\emph{\(G\)\nb-equivariant spinor bundle} for~\(E\) is a
\(\Z/2\)-graded real vector bundle~\(S_E\) together with a
grading preserving, \(G\)\nb-equivariant \Star{}algebra
isomorphism \(c\colon \Cliff(E)\to \operatorname{End}(S_E)\).
This representation is determined uniquely by its restriction
to \(E\subseteq \Cliff(E)\), which is a \(G\)\nb-equivariant
map \(c\colon E\to \operatorname{End}(S_E)\) such that \(c(x)\)
is odd and symmetric and satisfies \(c(x)^2 = \norm{x}^2\) for
all \(x\in E\).  We only use spinor bundles in this special
case.

Recall that the spinor bundle is unique up to tensoring with a
\(G\)\nb-equivariant real line bundle~\(L\): if \(c_t\colon
E\to S_t\) for \(t=1,2\) are two \(G\)\nb-equivariant spinor
bundles for~\(E\), then we define a \(G\)\nb-equivariant real
line bundle~\(L\) over~\(Y\) by
\[
L\defeq \Hom_{\Cliff(E)}(S_1,S_2),
\]
and the evaluation isomorphism \(S_1\otimes L
\xrightarrow{\cong} S_2\) intertwines the representations
\(c_1\) and~\(c_2\) of \(\Cliff(E)\).

\begin{definition}
  \label{def:sign_A_K-oriented}
  Let \(A\colon E\to E\) be a \(G\)\nb-equivariant vector
  bundle automorphism and let \(A=T\circ
  (A^*A)^{\nicefrac{1}{2}}\) be its polar decomposition with an
  orthogonal vector bundle automorphism \(T\colon E\to E\).

  Let~\(F\) be another \(G\)\nb-equivariant vector bundle
  over~\(Y\) with a non-degenerate bilinear form, such that the
  signature of \(E\oplus F\) is divisible by~\(8\), so that
  \(\Cliff(E\oplus F)\) is a bundle of matrix algebras
  over~\(\R\) and \(E\oplus F\) has a \(G\)\nb-equivariant
  spinor bundle, that is, there exists a \(G\)\nb-equivariant
  linear map \(c\colon E\oplus F\to \operatorname{End}(S)\)
  that induces an isomorphism of graded \Star{}algebras
  \[
  \Cliff(E\oplus F) \cong \operatorname{End}(S).
  \]
  Then
  \[
  c'\colon E\oplus F\to\operatorname{End}(S), \qquad
  (\xi,\eta)\mapsto c(T(\xi),\eta)
  \]
  yields another \(G\)\nb-equivariant spinor bundle for
  \(E\oplus F\).  We let
  \[
  \sign(A) \defeq
  \Hom_{\Cliff(E\oplus F)}\bigl((S,c'),(S,c)\bigr).
  \]
  This is a \(G\)\nb-equivariant \(\Z/2\)-graded real line
  bundle over~\(Y\).
\end{definition}

\begin{example}
 \label{exa:sign_ID}
 For example, consider \(A = \ID\).  Choose \(F\), \(S\) and
 \(c\) as in the definition.  The sign of \(\ID\) subject to
 these choices is \(\Hom_{\Cliff(E\oplus
   F)}\bigl((S,c),(S,c)\bigr)\).  Since the spinor
 representation is fibrewise irreducible, the only fibrewise
 endomorphisms of~\(S\) that commute with the Clifford action
 are multiples of the identity map.  The identity map is
 grading-preserving and commutes with~\(G\).  Hence
 \(\sign(\ID)\) is the trivial, evenly graded line bundle
 over~\(Y\) equipped with trivial \(G\)\nb-action.
\end{example}

Formally, we can think of the sign construction as follows. The
set of equivariant \(\K\)\nb-orientations on~\(X\) is in a
natural way a module over the Abelian group of real equivariant
line bundles.  The procedure given above of twisting with an
equivariant map \(f\colon X \to X\) is invariant under twisting
with a real equivariant line bundle, that is, it commutes with
the module structure.  Hence it must itself be given by module
product with some equivariant real line bundle.  The latter is
the sign of~\(f\).

\begin{lemma}
  \label{lem:sign_A_well-defined}
  The \(G\)\nb-equivariant \(\Z/2\)-graded real line bundle
  \(\sign(A)\) is well-defined, that is, it depends neither on
  the bundle~\(F\) nor on the spinor bundle~\(S\) for \(E\oplus
  F\).  Furthermore, \(\sign(A)\) is a homotopy invariant
  of~\(A\) and has the following additivity properties:
  \begin{itemize}
  \item \(\sign(A_1\circ A_2) \cong
    \sign(A_1)\otimes\sign(A_2)\) for two equivariant
    automorphisms \(A_1,A_2\colon E\rightrightarrows E\) of the
    same bundle;

  \item \(\sign(A_1\oplus A_2) \cong
    \sign(A_1)\otimes\sign(A_2)\) for two equivariant vector
    bundle automorphisms \(A_1\colon E_1\to E_1\) and
    \(A_2\colon E_2\to E_2\).

  \end{itemize}
\end{lemma}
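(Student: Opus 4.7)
The plan is to reduce the entire lemma to the case of orthogonal automorphisms via homotopy invariance, and then verify the remaining claims by direct manipulation of Clifford modules and spinor bundles.

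First, for homotopy invariance, I would run the construction of Definition~\ref{def:sign_A_K-oriented} in families: a continuous path $A_t$ of $G$-equivariant automorphisms has continuous polar part $T_t = A_t(A_t^*A_t)^{-1/2}$, giving a continuous family of twisted spinor representations $c'_t$, and $\Hom_{\Cliff(E\oplus F)}\bigl((S,c'_t),(S,c)\bigr)$ assembles into a $G$-equivariant line bundle over $Y\times[0,1]$ whose slices are canonically isomorphic. Then the straight-line path $A_s = T\bigl((1-s)\abs{A}+sI\bigr)$ with $\abs{A}=(A^*A)^{1/2}$ runs through invertibles and joins $A$ to $T$, so in what follows $A$ may be taken orthogonal.

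For well-definedness in the spinor bundle, two choices $(S_1,c_1)$ and $(S_2,c_2)$ for $E\oplus F$ differ by a real equivariant line bundle $L=\Hom_{\Cliff(E\oplus F)}(S_1,S_2)$; since the twists $c_i'$ are built from the same $T$, the same $L$ intertwines $(S_1,c_1')$ with $(S_2,c_2')$, and tensoring by $L^{-1}\otimes(\blank)\otimes L$ shows the two prescriptions for $\sign(A)$ agree. Independence of $F$ is proved by comparing two choices $F_1,F_2$ through $F_1\oplus F_2$ padded by a trivial Euclidean summand to restore signature divisibility by~$8$; on this combined bundle a spinor bundle may be taken in the form $S_1\hot S_{F_2\oplus\textup{triv}}$, the twist $T$ acts only on the $E$-factor, and the $\Hom_{\Cliff}$ splits as a tensor product in which the $F_2$-factor is canonically trivial by Example~\ref{exa:sign_ID}. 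Transitivity then gives equality for any two valid $F$'s.

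For the composition identity with orthogonal $T_1,T_2\colon E\to E$, fix data $(F,S,c)$ and observe that $c_1\defeq c\circ(T_1\oplus\ID)$ is itself a spinor representation on $S$, while $c_{12}\defeq c\circ(T_1T_2\oplus\ID)$ is precisely the $T_2$-twist of $(S,c_1)$; hence $\Hom_{\Cliff}\bigl((S,c_{12}),(S,c_1)\bigr)\cong\sign(T_2)$ by the independence of spinor bundle just proved, and the composition pairing of Homs yields a canonical isomorphism $\sign(T_1T_2)\cong\sign(T_2)\otimes\sign(T_1)$, symmetric in the two factors since line bundles tensor commutatively. For the direct-sum identity, data $(F_i,S_i,c_i)$ for the individual $A_i$ assemble into data $\bigl(F_1\oplus F_2,\,S_1\hot S_2,\,c_1\hot c_2\bigr)$ for $A_1\oplus A_2$ via $\Cliff(V_1\oplus V_2)\cong\Cliff(V_1)\hot\Cliff(V_2)$, the twist splits as $c_1'\hot c_2'$, and the $\Hom$ of graded tensor products factors into the tensor product of the factors. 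The main obstacle will be the equivariant bookkeeping of spinor bundle choices and trivial stabilizations: ensuring that trivial padding can always be chosen $G$-equivariantly and that the $\hot$-decompositions of spinor bundles are $G$-equivariant requires careful use of the equivariant representation theory of real Clifford algebras.
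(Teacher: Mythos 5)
Your proposal is correct and follows essentially the same route as the paper: both rest on the uniqueness of spinor bundles up to tensoring by a line bundle and on the graded tensor decomposition $\Cliff(V_1\oplus V_2)\cong\Cliff(V_1)\hot\Cliff(V_2)$ together with the induced factorization of $\Hom_{\Cliff}$. The only organizational differences are that the paper obtains $F$-independence as the $A_2=\ID$ specialization of the direct-sum isomorphism rather than via the explicit $E\oplus F_1\oplus F_2\oplus\textup{triv}$ bookkeeping you carry out, and it declares the composition identity trivial where you supply the (correct) argument using the composition pairing of $\Hom$'s together with spinor-bundle independence.
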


\begin{proof}
  Since the spinor bundle for \(E\oplus F\) is unique up to
  tensoring with some line bundle~\(L\), changing the spinor
  bundle replaces \(\sign(A)\) by \(L \otimes \sign(A) \otimes
  L^*\) for some line bundle, which is canonically isomorphic
  to \(\sign(A)\).  Homotopy invariance of \(\sign(A)\) follows
  because homotopic line bundles are isomorphic, and this
  implies that it suffices to treat orthogonal
  transformations~\(A\).

  Let \(E_1\) and~\(E_2\) be two \(G\)\nb-equivariant vector
  bundles with \(G\)\nb-equivariant spinor bundles
  \((S_1,c_1)\) and \((S_2,c_2)\).  Let \(A_1\colon E_1\to
  E_1\) and \(A_2\colon E_2\to E_2\) be two vector bundle
  automorphisms.  Equip \(E\defeq E_1\oplus E_2\) with the
  automorphism \(A\defeq A_1\oplus A_2\) and the
  \(G\)\nb-equivariant spinor bundle \(S\defeq S_1\otimes
  S_2\) with the representation
  \[
  c\defeq (c_1\otimes 1)\oplus (1\otimes c_2)\colon
  E_1\oplus E_2\to \operatorname{End}(S_1\otimes S_2).
  \]
  The canonical map
  \begin{multline*}
    \Hom_{\Cliff(E_1)}\bigl((S_1,c_1), (S_1,c_1\circ A_1)\bigr)
    \otimes
    \Hom_{\Cliff(E_2)}\bigl((S_2,c_2), (S_2,c_2\circ A_2)\bigr)
    \\\to
    \Hom_{\Cliff(E)}\bigl((S,c), (S,c\circ A)\bigr)
  \end{multline*}
  is an isomorphism.

  For \(A_2=\ID\), this shows that \(\sign(A_1)\) remains
  unchanged if we stabilise by another vector bundle with a
  spin structure.  Hence \(\sign(A)\) does not depend on~\(F\).
  We also get the second additivity property; the first one is
  trivial.
\end{proof}

\begin{example}
  \label{exa:orientation}
  By~\(\R\) we will understand the \(1\)\nb-dimensional real
  vector space with positive parity, and by~\(\R^\op\) the same
  vector space with odd parity.

  By the definitions, if~\(Y\) is a point and~\(G\) is trivial,
  then \(\sign(A)\) is either~\(\R\) or~\(\R^\op\).  Since
  \(\sign(A)\) is homotopy invariant, all
  orientation-preserving maps~\(A\) have
  \(\sign(A)=\sign(\ID)=\R\).  A routine computation (see the
  next example) shows that the sign of the orientation reversal
  automorphism \(x\mapsto -x\) on~\(\R\) is~\(\R^\op\).  Since
  any orientation-reversing map~\(A\) is homotopic to the
  direct sum of \(x\mapsto -x\) and \(\ID_{\R^{n-1}}\), it
  follows from Lemma~\ref{lem:sign_A_well-defined} that
  \(\sign(A) = \R\) for orientation-preserving~\(A\) and
  \(\sign (A) = \R^\op\) for orientation-reversing~\(A\), as
  claimed above.
\end{example}

\begin{example}
  \label{exa:orientation_representation}
  Consider \(G=\Z/2\).  We use similar notation as above, but
  decorate~\(\R\) (or~\(\R^\op\)) by a subscript which is a
  character, as appropriate.

  Let \(\tau\colon G\to\{1\}\) be the trivial character and let
  \(\chi\colon G\to \{+1,-1\}\) be the non-trivial character.
  Consider \(A\colon \R_\chi\to\R_\chi\), \(t\mapsto -t\).
  Then \(\sign(A)\cong\R_\chi^\op\) carries a non-trivial
  representation.

  To see this, let~\(F\) be~\(\R_\chi\) with
  negative definite metric.  Thus the Clifford algebra of
  \(\R_\chi\oplus\R_\chi\) is \(\Cliff_{1,1} \cong
  \mathbb{M}_{2\times2}(\R)\).  Explicitly, the map
  \[
  c(x,y)
  =\begin{pmatrix}0&x-y\\x+y&0\end{pmatrix}
  \]
  induces the isomorphism.  We equip~\(\R^2\) with the
  representation \(\tau\oplus\chi\), so that~\(c\) is
  equivariant.

  Twisting by~\(A\) yields another representation
  \[
  c'(x,y)\defeq c(-x,y) = Sc(x,y)S^{-1}
  \qquad\text{with} \quad S=S^{-1}=
  \begin{pmatrix}0&1\\-1&0\end{pmatrix}.
  \]
  Since~\(S\) reverses the grading and exchanges the
  representations \(\tau\) and~\(\chi\), it induces an
  isomorphism \((\R_\tau\oplus\R_\chi^\op)\otimes \R_\chi^\op
  \xrightarrow{\cong} \R_\tau\oplus\R_\chi^\op\).  Hence
  \(\sign(A) = \R_\chi^\op\).

  This result can be computed using the
  picture~\eqref{indexcharacter} instead.  The fixed point set
  of the identity element of~\(\Z/2\) is all of~\(\R\).  The
  fixed point set of the non-trivial generator of \(\Z/2\) is
  \(\{0\}\subset \R\).  Restricting~\(A\) to these subspaces
  gives \(\sign \det\) equal to \(-1\) and~\(1\), respectively.
  This describes the virtual character~\(-\chi\).
\end{example}

Now we comment on the relationship between our equivariant
orientation and the corresponding notion used by L\"uck and
Rosenberg in~\cite{Lueck-Rosenberg:Lefschetz}.  Our work
intersects with theirs when~\(Y\) is discrete, that is, we are
dealing with a self-map with isolated, regular fixed points.
Since both methods use induction in the same way, we will just
consider the case when~\(Y\) is a point and~\(G\) is a finite
group.

Thus~\(G\) acts by orthogonal transformations on a Euclidean
space~\(E\), and~\(A\) is a \(G\)\nb-equivariant, invertible
linear map on~\(E\).  We have shown above how to associate to
this data a virtual character \(\sign(A)\) of~\(G\), which is
described in~\eqref{indexcharacter}.  To the same data, L\"uck
and Rosenberg associate an invariant \(\Deg_0^G(A)\) in the
Abelian group \(U^G(\star)\) of \(\Z\)\nb-valued functions on
the set \(\Pi(G)\) of conjugacy classes of subgroups of~\(G\).
Computations in~\cite{Lueck-Rosenberg:Lefschetz} describe
\(\Deg_0^G(A)\) by the formula
\[
\Deg_0^G(A)(L) = \sign \det (A|_{\Fix(L)}).
\]
Comparison with~\eqref{indexcharacter} shows that this
restricts to our virtual character on cyclic subgroups.  Thus
their local invariant contains more information than ours.

\subsection{The smooth Lefschetz map}
\label{sec:smooth_case}

Now let~\(X\) be a smooth Riemannian manifold and assume
that~\(G\) acts on~\(X\) isometrically and continuously.

Let \(\selfmap\colon X\to X\) be a \(G\)\nb-equivariant
self-map of~\(X\).  In order to write down an explicit local
formula for \(\Lef(\selfmap)\), we impose the following
restrictions on~\(\selfmap\):
\begin{itemize}
\item \(\selfmap\) is smooth;

\item the fixed point subset \(\Fix(\selfmap)\) of~\(\selfmap\)
  is a submanifold of~\(X\);

\item if \((p,\xi)\in TX\) is fixed by the derivative
  \(D\selfmap\), then~\(\xi\) is tangent to \(\Fix(\selfmap)\).
\end{itemize}
The last two conditions are automatic if~\(\selfmap\) is
isometric with respect to some Riemannian metric (not
necessarily the given one) and hence if~\(\selfmap\) has finite
order.

In the simplest case, \(\selfmap\) and \(\ID_X\) are
transverse, that is, \(\ID-D\selfmap\) is invertible at each
fixed point of~\(\selfmap\); this implies that~\(\selfmap\) has
isolated fixed points.  While this situation can always be
achieved in the non-equivariant case, we cannot expect
transversality for non-discrete~\(G\) because the fixed point
set must be \(G\)\nb-invariant.

To describe the Lefschetz invariant, we abbreviate \(Y\defeq
\Fix(\selfmap)\).  This is a closed submanifold of~\(X\) by
assumption.  Let~\(\nu\) be the normal bundle of~\(Y\)
in~\(X\).  Since the derivative \(D\selfmap\) fixes the tangent
space of~\(Y\), it induces a linear map \(D_\nu\selfmap\colon
\nu\to\nu\).  By assumption, the map
\(\ID_\nu-D_\nu\selfmap\colon \nu\to\nu\) is invertible.

\begin{theorem}
  \label{the:Lef_smooth}
  Let~\(X\) be a complete smooth Riemannian manifold, let~\(G\)
  be a locally compact group that acts on~\(X\) continuously
  and by isometries, and let \(\selfmap\colon X\to X\) be a
  smooth self-map.  Assume that the fixed point subset
  \(Y\defeq \Fix(\selfmap)\) of~\(\selfmap\) is a submanifold
  of~\(X\) and that all tangent vectors of~\(X\) fixed by
  \(D\selfmap\) are tangent to~\(Y\).

  Let~\(\nu\) be the normal bundle of~\(Y\) in~\(X\) and let
  \(D_\nu\selfmap\colon \nu\to\nu\) be induced by the
  derivative of~\(\selfmap\) as above.  Let \(r_Y\colon
  \CONT_0(X)\to\CONT_0(Y)\) be the restriction map and let
  \(\Eul_Y\in\KK^G_0(\CONT_0(Y),\UNIT)\) be the equivariant
  \(\K\)\nb-homology class of the de Rham operator on~\(Y\).
  Then
  \[
  \Lef(\selfmap) =
  r_Y \otimes_{\CONT_0(Y)} \sign(\ID_\nu- D_\nu\selfmap)
  \otimes_{\CONT_0(Y)} \Eul_Y.
  \]
\end{theorem}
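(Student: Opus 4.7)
The plan is to substitute the smooth Kasparov dual into formula \eqref{eq:compute_Lef} and then localise the resulting Kasparov product to a tubular neighbourhood of $Y=\Fix(\selfmap)$. Specifically, I would take the dual $\dual=\ctau\defeq \CONT_0(X,\Cliff(TX))$, with $\Theta\in \RKK^G_n(X;\UNIT,\ctau)$ the class constructed via the exponential map from the Clifford symbol on $T^*X$, and $D\in\KK^G_{-n}(\ctau,\UNIT)$ the class of the de Rham operator of $X$; this is the dual used in \cite{Emerson-Meyer:Euler}. With these choices, \eqref{eq:compute_Lef} reads $\Lef(\selfmap)=\overline{\Theta}\otimes_{\CONT_0(X)\otimes\ctau}[\mu_\selfmap]\otimes_\ctau D$, where $\mu_\selfmap(f\otimes a)=(f\circ\selfmap)\cdot a$.

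Next I would perform a localisation argument. The class $\overline{\Theta}\otimes[\mu_\selfmap]$ can be represented by a Kasparov cycle on $\ctau$ whose support is concentrated near the subset of $X$ where the graph of $\selfmap$ meets the diagonal, namely $Y$. More concretely, I would use a $G$-invariant tubular neighbourhood $U\cong\nu$ of $Y$ in $X$ given by the exponential map, and use a cut-off $G$-invariant function supported in $U$ together with a homotopy of Kasparov cycles (as in \cite{Emerson-Meyer:Euler}) to replace the global computation by one supported on $\nu$. Away from $Y$, the operator $\ID-D\selfmap$ is invertible on $TX$, which gives the excision that makes the Kasparov cycle outside $U$ degenerate.

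On the normal bundle $\nu\to Y$, the linearisation of $\selfmap$ is the fibrewise map $D_\nu\selfmap$, so the remaining Kasparov product becomes the product of (i) the restriction map $r_Y\colon\CONT_0(X)\to\CONT_0(Y)$, (ii) the Thom class of $\nu$ twisted by the endomorphism $\ID_\nu-D_\nu\selfmap$, and (iii) the de Rham operator on $TY$. Using the decomposition $TX|_Y\cong TY\oplus\nu$ and the multiplicativity of Clifford/spinor constructions, the twisted Thom class splits as the Thom class of $TY$ (giving $\Eul_Y$ when paired with the de Rham operator) times a fibrewise contribution over $Y$ depending only on the orthogonal vector bundle automorphism $\ID_\nu-D_\nu\selfmap$. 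By the universal property stated informally after Example~\ref{exa:sign_ID}, that fibrewise contribution is precisely the module twist $\sign(\ID_\nu-D_\nu\selfmap)$ of Definition~\ref{def:sign_A_K-oriented}, because it is homotopy invariant, multiplicative, trivial for $A=\ID$, and compatible with stabilisation by bundles admitting equivariant spin structures.

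The main obstacle will be step (3), where I must identify the local contribution near $Y$ with exactly the $\sign$ line bundle rather than something only canonically isomorphic up to stabilisation. Concretely, one must verify that the Kasparov cycle obtained by twisting the spinor bundle of $\nu$ by $\ID_\nu-D_\nu\selfmap$ represents the same class as the product of $\sign(\ID_\nu-D_\nu\selfmap)$ with the untwisted Thom class; this requires the polar decomposition to reduce to an orthogonal automorphism, and a spin stabilisation by an auxiliary bundle $F$ so that $\Cliff(\nu\oplus F)$ becomes a matrix bundle and the intertwiner bundle is defined. Once this identification is made, assembling the three factors $r_Y$, $\sign(\ID_\nu-D_\nu\selfmap)$, $\Eul_Y$ via the associativity of the Kasparov product yields the claimed formula.
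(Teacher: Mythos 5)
Your outline follows the same route as the paper: choose the Clifford‐algebra dual $\dual=\CONT_0\bigl(X,\Cliff(T^*X)\bigr)$ with the explicit $\Theta$ supported near the diagonal and $D$ given by the de~Rham operator, substitute into \eqref{eq:compute_Lef}, localise to a tubular neighbourhood of $Y=\Fix(\selfmap)$, linearise there, and identify the local factor with $\sign(\ID_\nu-D_\nu\selfmap)$ before reassembling with $\Eul_Y$. That is indeed the structure of the proof, split in the paper between Lemma~\ref{lem:simplify_Theta_mu_selfmap} and Proposition~\ref{pro:index_self-map}.

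Two points need attention. First, your excision mechanism is misstated: you write that $\ID-D\selfmap$ is invertible on $TX$ away from $Y$, but the hypothesis of Theorem~\ref{the:Lef_smooth} only controls fixed vectors of $D\selfmap$ \emph{over $Y$}; away from $Y$ nothing is assumed about $D\selfmap$. What actually makes the cycle degenerate outside a neighbourhood of $Y$ is that the multiplier $F'(x)=F(x,\selfmap x)$ is a nonzero Clifford element whenever $x\neq\selfmap(x)$, hence can be rescaled to be unitary there. The linearisation to $\tilde F(y,\xi)=(\ID-D\selfmap)(\xi)$ is then a separate homotopy, valid only after shrinking the tubular neighbourhood, and uses the hypothesis that $\ID_\nu-D_\nu\selfmap$ is injective on $\nu$. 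Second, the step you flag as the ``main obstacle'' is in fact the mathematical content of Proposition~\ref{pro:index_self-map}, and the ``universal property'' remark after Example~\ref{exa:sign_ID} does not close it: that remark is explicitly heuristic, and in any case $\cRKK^G_0\bigl(X;\CONT_0(X),\CONT_0(X)\bigr)$ contains units that are not complexified real line bundles, so knowing that the twisted Thom class differs from the untwisted one by a unit with certain formal properties does not pin it down as $\sign(f)$. What is needed, and what the paper supplies, is the explicit construction: take $S=\Cliff(E)$ as a spinor bundle for $E\oplus E^-$ via the left and right regular representations together with the grading, and exhibit the canonical equivariant isomorphism $\sign(A)\otimes_\R(S,c)\cong(S,c')$ compatible with right Clifford multiplication. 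Without this, your argument names the required identification but does not establish it.
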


\begin{remark}
  \label{rem:Lef_ID}
  It is shown in~\cite{Emerson-Meyer:Euler} that the
  equivariant Euler characteristic \(\Eul_X \defeq \Lef
  (\ID_X)\) is the class of the de Rham operator on~\(X\).
  This justifies the notation \(\Eul_Y\) in
  Theorem~\ref{the:Lef_smooth}.  If~\(\selfmap\) is the
  identity map, then Theorem~\ref{the:Lef_smooth} reduces to
  this description of \(\Eul_X\).
\end{remark}

Now we make Theorem~\ref{the:Lef_smooth} more concrete in the
special case where~\(\selfmap\) and~\(\ID_X\) are transverse.
Then the fixed point subset~\(Y\) is discrete. A discrete set
is a manifold, and Theorem~\ref{the:Lef_smooth} describes its
Euler characteristic: the de Rham operator on~\(Y\) is the zero
operator on \(L^2(\Lambda^*_\C(T^*X)) = \ell^2(Y)\), so that we
have the class of the representation
\(\CONT_0(Y)\to\Comp(\ell^2Y)\) by pointwise multiplication
operators.

The normal bundle~\(\nu\) to~\(Y\) in~\(X\) is the restriction
of~\(TX\) to~\(Y\).  For \(p\in Y\), let~\(n_p\) be~\(+1\) if
\(\ID_{T_pX}-D_p\selfmap\) preserves orientation, and~\(-1\)
otherwise.  The graded equivariant line bundle
\(\sign(\ID_\nu-D_\nu\selfmap)\) in
Theorem~\ref{the:Lef_smooth} is determined by pairs
\((n_p,\chi_p)\) for \(p\in Y\), where~\(n_p\) is the parity of
the representation at~\(p\) and~\(\chi_p\) is a certain
real-valued character \(\chi_p\colon \Stab(p)\to\{-1,+1\}\)
that depends on \(\ID_{T_pX}-D_p\selfmap\) and the
representation of the stabiliser \(\Stab(p)\subseteq G\) on
\(T_pX\) via the formula~\eqref{indexcharacter}.  Equivariance
implies that~\(n_p\) is constant along \(G\)\nb-orbits,
whereas~\(\chi_p\) behaves like \(\chi_{g\cdot p} = \chi_p\circ
\operatorname{Ad}(g^{-1})\).  Let \(\ell^2_\chi(Gp)\) be the
representation of \(G\ltimes\CONT_0\bigl(G/\Stab(p)\bigr)\)
obtained by inducing the representation~\(\chi_p\) from
\(\Stab(p)\), and let \(\CONT_0(X)\) act on \(\ell^2_\chi(Gp)\)
by restriction to \(G/\Stab(p)\).  This defines a
\(G\)\nb-equivariant \Star{}homomorphism
\[
\xi_{Gp,\chi}\colon \CONT_0(X)\to \Comp(\ell^2_\chi G).
\]
Theorem~\ref{the:Lef_smooth} implies

\begin{corollary}
  If the graph of~\(\phi\) is transverse to the diagonal in
  \(X\times X\), then
  \[
  \Lef(\selfmap) = \sum_{Gp\in G\backslash\Fix(\selfmap)}
  n_p [\xi_{Gp,\chi}]
  \]
  where \([\xi_{Gp,\chi}] \in \KK^G_0(\CONT_0(X),\UNIT)\) and
  the multiplicities~\(n_p\) are explained above.

  Furthermore, the character \(\chi\colon \Stab_G(p) \to
  \{-1,+1\}\) at a fixed point~\(p\) has the explicit formula
  \[
  \chi(g) = \sign \det \bigl(\ID - D_p\phi\bigr)
  \cdot \sign \det \bigl( \ID - D_p\phi|_{\Fix(g)}\bigr).
  \]
\end{corollary}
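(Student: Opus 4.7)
The plan is to specialize Theorem~\ref{the:Lef_smooth} to the transverse setting and then identify each of the three factors on its right hand side, at a single fixed point, with the concrete representation-theoretic data asserted in the corollary. First I would check that transversality implies the hypotheses of Theorem~\ref{the:Lef_smooth}: if \(\ID - D_p\selfmap\) is invertible at every fixed point, then \(Y \defeq \Fix(\selfmap)\) is discrete, hence is a \(0\)\nb-dimensional submanifold, and the condition that \(D\selfmap\)-fixed tangent vectors be tangent to~\(Y\) is vacuous. So Theorem~\ref{the:Lef_smooth} applies and it remains to rewrite its right hand side.

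For the Euler class factor, since~\(Y\) is discrete the de Rham operator on~\(Y\) is the zero operator on \(\ell^2(Y)\), so \(\Eul_Y\in\KK^G_0(\CONT_0(Y),\UNIT)\) is represented by the pointwise multiplication representation \(\CONT_0(Y)\to\Comp(\ell^2(Y))\).  Decomposing~\(Y\) into \(G\)\nb-orbits \(Gp\) and using the identification \(\ell^2(Gp) \cong \ell^2(G/\Stab(p))\), the product \(r_Y \otimes_{\CONT_0(Y)} \Eul_Y\) splits as a direct sum over orbits of classes of the form~\eqref{eq:dot_xi}; inserting the sign factor at~\(p\) will twist the induced representation by a character.

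The core calculation is the identification of the virtual \(\Stab(p)\)-representation \(\sign(\ID_{\nu}-D_\nu\selfmap)\) at the fibre over~\(p\) with the pair \((n_p,\chi)\) from the statement.  At a single point the normal bundle is \(T_pX\), on which \(\Stab(p)\) acts orthogonally, and \(A\defeq \ID_{T_pX}-D_p\selfmap\) is an equivariant invertible linear map.  By the discussion following Example~\ref{exa:orientation_representation}, applying Definition~\ref{def:sign_A_K-oriented} to this data yields a pair \((\varepsilon, \chi)\) with \(\varepsilon\in\{0,1\}\) the parity and \(\chi = (-1)^\varepsilon \chi_{\Stab(p),A}\), where
\[
\chi_{\Stab(p),A}(g) = \sign\det\bigl(A|_{\Fix(g)}\bigr) = \sign\det\bigl(\ID-D_p\selfmap|_{\Fix(g)}\bigr).
\]
Evaluating at \(g=e\) gives \((-1)^\varepsilon = \sign\det(A) = n_p\), so the stated formula for~\(\chi\) follows at once.

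Plugging the computed \((n_p,\chi)\) back into the product from Theorem~\ref{the:Lef_smooth} and taking the Kasparov product over \(\CONT_0(Y)\) with \(r_Y\) and \(\Eul_Y\), the contribution of a single orbit \(Gp\) becomes the class of the representation obtained by inducing~\(\chi\) from \(\Stab(p)\) to~\(G\) and letting \(\CONT_0(X)\) act by restriction to \(Gp\), scaled by the parity sign~\(n_p\); this is precisely \(n_p [\xi_{Gp,\chi}]\) by the definition of \(\xi_{Gp,\chi}\).  The main technical obstacle I anticipate is making the orbit decomposition rigorous: one must check that the Kasparov product over \(\CONT_0(Y)\) intertwines the fibrewise virtual representation \((n_p,\chi)\) at~\(p\) with induction from \(\Stab(p)\)-representations up to the class \([\xi_{Gp,\chi}]\), compatibly with the scalar~\(n_p\).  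Once that compatibility is verified, summing over orbits reassembles the displayed formula and the corollary follows.
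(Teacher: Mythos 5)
Your proof is correct and matches the paper's intended argument: the corollary is obtained by specialising Theorem~\ref{the:Lef_smooth} to the transverse case where $Y$ is discrete, reading off $\Eul_Y$ as the pointwise-multiplication representation on $\ell^2(Y)$, decomposing over $G$\nobreakdash-orbits, and identifying the fibrewise $\sign(\ID_\nu-D_\nu\selfmap)$ at $p$ with the pair $(n_p,\chi_p)$ via the relation $\chi=(-1)^\varepsilon\chi_{\Stab(p),A}$ from \S\ref{sec:orient_vb_auto} and Example~\ref{exa:orientation}. This is exactly the unpacking the paper leaves implicit in the discussion preceding the corollary, so no further comparison is needed.
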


If, in addition, \(G\) is trivial and~\(X\) is connected, then
\(\xi_{Gp,\chi}=\ev_p\) for all \(p\in Y\); moreover, all point
evaluations have the same \(\K\)\nb-homology class because
they are homotopic.  Hence we get the classical Lefschetz data
multiplied by the \(\K\)\nb-homology class of a point
\[
\Lef(\selfmap) =
\biggl(\sum_{p\in\Fix(\selfmap)}
\sign(\ID_{T_pX} - D\selfmap_p)\biggr) \cdot [\ev]
\]
as asserted above.  This sum is finite if~\(X\) is compact.

\begin{lemma}
  \label{pointevals}
  Let \(H\subseteq G\) be compact and open, let \(p,q\in X^H\)
  belong to the same path component of the fixed point
  subspace~\(X^H\), and let \(\chi\in\Rep(H)\).  Then
  \[
  \bigl[\xi_{Gp,\ind_H^{\Stab(Gp)}(\chi)}\bigr] =
  \bigl[\xi_{Gq,\ind_H^{\Stab(Gq)}(\chi)}\bigr]
  \qquad \text{in \(\KK_0^G(\CONT_0(X),\UNIT)\).}
  \]
\end{lemma}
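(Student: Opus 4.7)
The approach is a direct homotopy of Kasparov cycles built from a path in $X^H$; all the work is in setting up a single Hilbert $G$-module that carries both cycles, and then interpolating the $\CONT_0(X)$-actions using the path.

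First I would unify the underlying Hilbert $G$-modules. Since $H$ is compact and open, $G/H$ is discrete and $\ind_H^G V_\chi$ identifies with $\ell^2(G/H)\otimes V_\chi$, where $V_\chi$ is the representation space of~$\chi$. By induction in stages,
\[
\ind_{\Stab(p)}^G\bigl(\ind_H^{\Stab(p)}\chi\bigr) \cong \ind_H^G\chi \cong \ind_{\Stab(q)}^G\bigl(\ind_H^{\Stab(q)}\chi\bigr)
\]
as $G$-representations, so both Kasparov cycles may be realised on the single Hilbert $G$-module $\Hils\defeq \ell^2(G/H)\otimes V_\chi$; only the action of $\CONT_0(X)$ differs. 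Unwinding the definition of $\xi_{Gp,\cdot}$ given before~\eqref{eq:dot_xi}, the action at $p$ becomes $(\pi_p(f)\psi)(gH) = f(g\cdot p)\,\psi(gH)$, and analogously at~$q$; these formulas are well defined on $G/H$ because $H\subseteq \Stab(p)\cap\Stab(q)$.

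Second I would choose a continuous path $\gamma\colon [0,1]\to X^H$ from $p$ to $q$ and interpolate via
\[
(\pi_t(f)\psi)(gH) \defeq f\bigl(g\cdot\gamma(t)\bigr)\,\psi(gH).
\]
Because $\gamma(t)\in X^H$ for every~$t$, the scalar $f(g\cdot\gamma(t))$ depends only on the coset $gH$, so $\pi_t$ is a well-defined $G$-equivariant \Star{}homomorphism $\CONT_0(X)\to\Comp(\Hils)$ with $\pi_0=\pi_p$ and $\pi_1=\pi_q$. Assembling the family $(\pi_t)$ into a $G$-equivariant \Star{}homomorphism $\CONT_0(X)\to \CONT([0,1],\Comp(\Hils))$ gives a cycle in $\KK^G_0(\CONT_0(X),\CONT[0,1])$ whose restrictions to $\{0\}$ and~$\{1\}$ are the two cycles in question. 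Homotopy invariance of $\KK^G$ then yields the equality.

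The two routine but necessary checks are that each $\pi_t(f)$ is a compact operator and that $t\mapsto \pi_t(f)$ is norm continuous; both rely on properness of the $G$-action on~$X$. For compactly supported~$f$, properness implies that the set
\[
\bigl\{\,gH\in G/H : g\cdot\gamma([0,1])\cap\supp(f)\neq\emptyset\,\bigr\}
\]
is finite, so $\pi_t(f)$ is simultaneously supported on a fixed finite set of cosets and the continuity in~$t$ reduces to continuity of $t\mapsto f(g\cdot\gamma(t))$ on this finite set, which is immediate. A standard density argument extends both properties from $\CONT_c(X)$ to $\CONT_0(X)$. The main conceptual step is the first one, where induction in stages puts both cycles on a common Hilbert module so that only the path of representations needs to be interpolated; after that, the proof is essentially bookkeeping.
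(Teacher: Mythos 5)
Your proof is correct and follows essentially the same route as the paper: identify both Hilbert modules with $\ind_H^G\chi$ via induction in stages, then interpolate the $\CONT_0(X)$-actions along a path in $X^H$ and invoke homotopy invariance of $\KK^G$. You simply spell out the compactness and norm-continuity checks (relying on properness of the action) that the paper leaves implicit.
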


\begin{proof}
  The \(G\)\nb-Hilbert spaces on which the two representations
  \(\xi_{Gp,\dots}\) and~\(\xi_{Gq,\dots}\) act are both
  isomorphic to \(\ind_H^G\chi\).  A homotopy from~\(p\)
  to~\(q\) inside~\(X^H\) provides a path of
  \(G\)\nb-equivariant \Star{}homomorphisms \(\CONT_0(X) \to
  \CONT_0(G/H) \to \Comp(\ind_H^G\chi)\).
\end{proof}

\begin{example}
  \label{exa:stretch_R_equivariantly}
  Let \(X\subseteq\C\) be the unit circle and let
  \(G=\Z/2=\{1,g\}\) act on~\(X\) by the conjugation \(z\mapsto
  \overline{z}\), that is, reflection at the real axis.  Let
  \(n\in\N_{\ge2}\) and consider the \(G\)\nb-equivariant
  self-map \(\selfmap(z)\defeq z^n\) of~\(X\).  Its fixed point
  set is the set of roots of unity of order \(n-1\).  The
  derivative \(D\selfmap\) is~\(n\) at all fixed points, so
  that \(\ID_\R-D\selfmap=1-n\) is a negative multiple of the
  identity map on~\(\R\).

  Let \(\tau\) and~\(\chi\) be the trivial and non-trivial
  representations of~\(G\) and let \(\C_\tau\) and~\(\C_\chi\)
  be~\(\C\) with the corresponding representation of~\(G\).  If
  \(p\in X\) is a \(\selfmap\)\nb-fixed point different
  from~\(\pm1\), then its stabiliser in~\(G\) is trivial, so
  that there is no representation~\(\chi_p\) to worry about.
  By Example~\ref{exa:orientation}, each of these fixed points
  contributes \(-[\xi_{\{z,\conj{z}\}}]\), where
  \(\xi_{\{z,\conj{z}\}}\) is the representation of
  \(\CONT_0(X)\) on \(\ell^2(\{z,\conj{z}\})\) by pointwise
  multiplication.  Example~\ref{exa:orientation_representation}
  shows that the contribution of~\(1\) is \(-[\xi_{\{1\},
    \chi}]\); this is just the class of the representation of
  \(\CONT_0(X)\) on~\(\C_\chi\) by evaluation at \(1\in X\).
  If~\(n\) is odd, then there is a similar contribution
  \(-[\xi_{\{-1\}, \chi}]\) from the fixed point~\(-1\).

  The limit of \(\xi_{\{z,\conj{z}\}}\) for \(z\to\pm1\) is
  \([\xi_{\{1\}, \chi\oplus \tau}]\) because \(\chi\oplus\tau\)
  is the regular representation of~\(G\), and by
  Lemma~\ref{pointevals}.  Hence the contributions of~\(\pm1\)
  are \([\xi_{\pm 1, \tau}] - [\xi_{\{z,\conj{z}\}}]\) for any
  \(z\in X\setminus\{\pm1\}\).  We may abbreviate \(\xi_{\pm1
    ,\tau}=\ev_{\pm1}\).  Thus we get
  \[
  \Lef(\selfmap) =
  \begin{cases}
    [\ev_1] + [\ev_{-1}] - \frac{n+1}{2}[\xi_{\{z,\conj{z}\}}]
    &\text{if \(n\) is odd,}\\
    [\ev_1] - \frac{n}{2}[\xi_{\{z,\conj{z}\}}]
    &\text{if \(n\) is even.}
  \end{cases}
  \]

  We may also triangulate~\(X\) by taking~\(\pm1\) as vertices
  and the upper and lower semi-circles as edges.  The
  combinatorial Lefschetz invariant of~\(\selfmap\) is given by
  the same formula in this case, as it should be.  When we
  forget the \(G\)\nb-action, \(\Lef(\selfmap)\) simplifies to
  \((1-n)\cdot [\ev]\); this is the expected result because
  \(1-n\) is the supertrace of the action of~\(\selfmap\) on
  the homology of~\(X\).
\end{example}

\begin{example}
  Let \(G\cong \Z\cross \Z/2\Z\) be the infinite dihedral
  group, identified with the group of affine transformations
  of~\(\R\) generated by \(u(x) = -x\) and \(w(x) = x+1\).
  Then~\(G\) has exactly two conjugacy classes of finite
  subgroups, each isomorphic to~\(\Z/2\).  Its action on~\(\R\)
  is proper, and the closed interval \([0,\nicefrac{1}{2}]\) is
  a fundamental domain.  There are two orbits of fixed point
  in~\(\R\) -- those of \(0\) and~\(\nicefrac{1}{2}\) -- and
  their stabilisers represent the two conjugacy classes of
  finite subgroups.

  Now we use some notation from
  Example~\ref{exa:orientation_representation}.  Each copy of
  \(\Z/2\) acting on the tangent space at the fixed point acts
  by multiplication by~\(-1\) on tangent vectors.  Therefore,
  the computations in
  Example~\ref{exa:orientation_representation} show that for
  any nonzero real number~\(A\), viewed as a linear
  transformation of the tangent space that commutes with
  \(\Z/2\), we have
  \[
  \sign (A) =
  \begin{cases}
    \R^\op_\chi&\text{if \(A <0\), and}\\
    \R_\tau&\text{if \(A>0\).}
  \end{cases}
  \]

  Let~\(\phi\) be a small \(G\)\nb-equivariant perturbation of
  the identity map \(\R\to\R\) with the following properties.
  First, \(\phi\) maps the interval \([0,\nicefrac{1}{2}]\) to
  itself.  Secondly, its fixed points in
  \([0,\nicefrac{1}{2}]\) are \(\nicefrac{1}{4}\) and the end
  points \(0\) and \(\nicefrac{1}{2}\); thirdly, its derivative is
  bigger than~\(1\) at both endpoints and between \(0\)
  and~\(1\) at \(\nicefrac{1}{4}\).  Such a map~\(\phi\)
  clearly exists.  Furthermore, it is homotopic to the identity
  map, so that \(\Lef(\phi)=\Eul_\R\).

  By construction, there are three fixed points modulo~\(G\),
  namely, the orbits of \(0\), \(\nicefrac{1}{4}\) and
  \(\nicefrac{1}{2}\).  The isotropy groups of the first and
  third orbit are non-conjugate subgroups isomorphic
  to~\(\Z/2\); from
  Example~\ref{exa:orientation_representation}, each of them
  contributes \(\R^\op_\chi\).  The point \(\nicefrac{1}{4} \)
  contributes the trivial character of the trivial subgroup.
  Hence
  \[
  \Lef (\phi) = - [\xi_{\Z,\chi}] - [\xi_{\Z+\nicefrac{1}{2},\chi}]
  + [\xi_{\Z+\nicefrac{1}{4}}].
  \]

  On the other hand, suppose we change the above map~\(\phi\)
  to fix the same points but to have zero derivative at~\(0\)
  and~\(\nicefrac{1}{2}\) and large derivative at
  \(\nicefrac{1}{4}\).  This is obviously possible.  Then we
  get contributions of~\(\R_\tau\) at~\(0\) and
  \(\nicefrac{1}{2}\) and a contribution of
  \(-[\xi_{\nicefrac{1}{4}}]\) at \(\nicefrac{1}{4}\).  Hence
  \[
  \Lef (\phi) = [\xi_{\Z, \tau}] +
  [\xi_{\Z+\nicefrac{1}{2},\tau}] - [\xi_{\Z+\nicefrac{1}{4}}].
  \]
  Combining both formulas yields
  \begin{equation}
    \label{mysteriousequality}
    [\xi_{\Z,\tau}] + [\xi_{\Z+\nicefrac{1}{2},\tau}]
    -[\xi_{\Z+\nicefrac{1}{4}}]
    = - [\xi_{\Z,\chi}] - [\xi_{\Z+\nicefrac{1}{2},\chi}]
    + [\xi_{\Z +\nicefrac{1}{4}}].
  \end{equation}
  By the way, the left-hand side is the description of
  \(\Eul_\R\) we get from the combinatorial dual with the
  obvious \(G\)\nb-invariant triangulation of~\(\R\) with
  vertex set \(\Z\cdot\nicefrac{1}{2} \subset \R\).

  It is possible, of course, to
  verify~\eqref{mysteriousequality} directly.  Indeed, since
  \(\tau + \chi\) is the regular representation~\(\lambda\),
  \eqref{mysteriousequality} is equivalent to
  \[
  [\xi_{\Z,\lambda}] + [\xi_{\Z+\nicefrac{1}{2},\lambda}] =
  2[\xi_{\Z+\nicefrac{1}{4}}],
  \]
  and this relation follows from Lemma~\ref{pointevals}.
\end{example}

Now let~\(G\) be a Lie group acting properly and isometrically
on a Riemannian manifold~\(X\).  Assume that \(G\backslash X\)
is compact.  Let \(\selfmap\colon X \to X\) be a smooth map.
Assume that \(F \defeq \Fix(\selfmap)\) is a manifold and the
restriction of \(\ID - D_p\psi\) to the normal bundle of
\(T_pF\subseteq T_pX\) is injective for all \(p\in F\).
Restriction of the Riemannian metric on~\(X\) to~\(F\) gives a
Riemannian metric on~\(F\).  Since~\(F\) is closed and
\(G\backslash X\) is compact, \(G\backslash F\) is compact.
Hence \(G\backslash F\) is complete with respect to the
restricted metric.

Let~\(H\) be a Lie group acting isometrically and by
orientation-preserving maps on an odd-dimensional complete
Riemannian manifold~\(F\).  Then \(\Eul_F \cong 0\)
(see~\cite{Emerson-Meyer:Euler}).  Using
Theorem~\ref{the:Lef_smooth}, we see that odd-dimensional
components of \(G\backslash F\) do not contribute to
\(\Lef(\selfmap)\).  Only even-dimensional components may have
non-trivial Euler characteristics.

\begin{example}
  \label{liegroupexample}
  Let \(\psi\colon \Sphere^3 \to \Sphere^3\) be the reflection
  \(\psi(x,y,z,w) \defeq (x,y,z,-w)\), where
  \[
  \Sphere^3 \defeq
  \{(x,y,z,w)\in \R^4 \mid x^2+y^2+z^2+w^2 =1\}.
  \]
  The group \(\textup{O}(3,\R) \subset \textup{O}(4,\R)\) acts
  on~\(\Sphere^3\) commuting with~\(\psi\), so that
  Theorem~\ref{the:Lef_smooth} applies.  We have \(\Fix(\psi)
  \cong \Sphere^2\) and
  \[
  \Lef(\psi) = - i_*(\Eul_{\Sphere^2})
  \in \KK^{\textup{O}(3,\R)}_0(\CONT(\Sphere^3),\C),
  \]
  where \(i\colon \Sphere^2 \to \Sphere^3\) is the inclusion as
  the equator.  The bundle \(\sign (\ID - D\psi)\) is the
  trivial line bundle with trivial action of
  \(\textup{O}(3,\R)\), with negative parity; this accounts for
  the sign.  Note that \(\Eul_{\Sphere^2}\) with respect to the
  group \(\textup{O}(3,\R)\) is nonzero because the
  (non-equivariant) Euler characteristic of~\(\Sphere^2\)
  is~\(2\).  The equivariant index of \(\Eul_{\Sphere^2}\) in
  \(\KK^{\textup{O}(3,\R)}_0(\C,\C)\) is given by the character
  \(\tau + \det\), where~\(\tau\) is the trivial character and
  \(\det\colon \textup{O}(3,\R) \to \{-1,+1\}\) is the
  determinant.  This follows from the usual description of the
  index of the de Rham operator in terms of spaces of harmonic
  forms.
\end{example}

Finally, at least in the non-equivariant case, we record that
maps of non-compact \(G\)\nb-spaces usually have zero
Lefschetz invariants:

\begin{corollary}
  \label{cor:Lef_vanishes}
  In the situation of Theorem~\textup{\ref{the:Lef_smooth}},
  assume that~\(G\) is the trivial group, \(\selfmap\)
  and~\(\ID_X\) are transverse, and none of the connected
  components of~\(X\) are compact.  Then \(\Lef(\selfmap)=0\).
\end{corollary}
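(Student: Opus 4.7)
The plan is to combine the preceding Corollary with a standard ``homotopy to infinity'' argument for point-evaluation classes. Specialising that Corollary to $G=\{e\}$ in the transverse case gives
\begin{equation*}
  \Lef(\selfmap) = \sum_{p \in \Fix(\selfmap)} n_p\, [\ev_p],
  \qquad n_p \defeq \sign\det(\ID_{T_pX}-D_p\selfmap)\in\{\pm1\},
\end{equation*}
so it suffices to prove that each class $[\ev_p]$ vanishes in $\KK_0(\CONT_0(X),\UNIT)$ when the connected component of $p$ is non-compact.

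Fix $p\in X$ and let $X_0\subseteq X$ be its connected component, which by hypothesis is non-compact. The point evaluation $\ev_p$ factors through the restriction $\CONT_0(X)\to\CONT_0(X_0)$, so I may reduce to the case where $X$ is connected and non-compact. Consider the one-point compactification $X^+=X\sqcup\{\infty\}$ and the short exact sequence $0\to\CONT_0(X)\hookrightarrow\CONT(X^+)\to\UNIT\to0$. The evaluation $\ev_p$ extends to a \Star{}homomorphism $\widetilde\ev_p\colon\CONT(X^+)\to\UNIT$. Since $X$ is connected and non-compact, $X^+$ is path-connected, so I can pick a continuous path $\col\colon[0,1]\to X^+$ with $\col(0)=p$ and $\col(1)=\infty$. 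Pointwise evaluation along $\col$ gives a continuous family $t\mapsto\widetilde\ev_{\col(t)}$ of \Star{}homomorphisms $\CONT(X^+)\to\UNIT$, hence a homotopy $[\widetilde\ev_p]=[\widetilde\ev_\infty]$ in $\KK_0(\CONT(X^+),\UNIT)$. But $\widetilde\ev_\infty$ is identically zero on the subalgebra $\CONT_0(X)$ of functions vanishing at infinity, so pulling back through the inclusion yields $[\ev_p]=0$. Plugging this into the displayed formula gives $\Lef(\selfmap)=0$.

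The main obstacle is the rigorous interpretation of the sum when $\Fix(\selfmap)$ is infinite, which can happen since $X$ is non-compact. In that case the formula from the preceding Corollary should be read as the class of the single Kasparov cycle $\CONT_0(X)\to\Comp(\ell^2(\Fix(\selfmap)))$ given by restriction and pointwise multiplication, with $\Z/2$-grading encoding the signs $n_p$. To exhibit the vanishing directly, one would choose a proper exhaustion function $h\colon X\to[0,\infty)$ and then, for each $y\in\Fix(\selfmap)$, a continuous path $\col_y\colon[0,1]\to X^+$ from $y$ to $\infty$ satisfying $h(\col_y(t))\ge h(y)$ for all $t$ and $h(\col_y(t))\to\infty$ as $t\to1$. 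The induced family of representations $\pi_t(f)=M_{y\mapsto f(\col_y(t))}$ on $\ell^2(\Fix(\selfmap))$ is then a norm-continuous path from the given cycle at $t=0$ to the (degenerate) zero representation at $t=1$, completing the argument.
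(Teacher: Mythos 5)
Your argument is in the same spirit as the paper's: represent $\Lef(\selfmap)$ by a single $*$\nb-homomorphism $\CONT_0(X)\to\Comp(\ell^2 Y)$ with $Y=\Fix(\selfmap)$ discrete and the $\Z/2$\nb-grading recording the signs $n_p$, and then push all the evaluation points simultaneously to $\infty$. The first half of your write-up (disposing of a single $[\ev_p]$ by a path to $\infty$) is fine but, as you yourself note, does not suffice when $Y$ is infinite; the substance is entirely in the uniform push-off, and there your argument has a gap.

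For a general proper exhaustion function $h$ there need not exist paths $\col_y$ with $h(\col_y(t))\ge h(y)$ for all $t$: the super-level set $\{h\ge h(y)\}$ may have a relatively compact component containing $y$. For instance on $X=\R$ (which has no compact components) one can arrange a proper $h$ with $h(0)=1$ and $h<1$ on a punctured neighbourhood of $0$, so that the component of $0$ in $\{h\ge 1\}$ is a point and no admissible $\col_0$ exists. Notice also that your last step never invokes the hypothesis that $X$ has no compact components — a sign that something is missing. The paper avoids the difficulty by fixing an exhaustion $(K_m)$ with the property that each $X\setminus K_m$ has no compact component, and requiring the path $I_p$ from $p$ only to avoid $K_{m-1}$, where $m$ is minimal with $p\in K_m$. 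That one level of slack is what makes the paths exist, and it still guarantees that for every compact $K\subseteq X$ all but finitely many paths miss $K$, which is precisely the uniform control needed for $t\mapsto\bigoplus_{p\in Y}\ev_{I_p(t)}$ to be a norm-continuous homotopy through $*$\nb-homomorphisms into $\Comp(\ell^2 Y)$. Your version would be correct if you added the stipulation that $h$ be chosen so that no super-level set has a relatively compact component (this is where the hypothesis on $X$ enters, e.g.\ via a proper Morse function on the open manifold $X$ with no local maxima); as stated, the condition imposed on the paths $\col_y$ is not always achievable.
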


\begin{proof}
  There is an increasing sequence of compact subsets~\((K_m)\)
  in~\(X\) such that, for each \(m\in\N\), no component of
  \(X\setminus K_m\) is compact.  For each \(p\in Y\),
  let~\(m\) be minimal with \(p\in K_m\), and let~\(I_p\) be a
  path from~\(p\) to~\(\infty\) in~\(X^+\) that does not meet
  \(K_{m-1}\).  This ensures that \(\bigoplus_{p\in Y}
  \ev_{I_p(t)}\) is a homotopy from the \Star{}homomorphism
  describing \(\Lef(\selfmap)\) to the zero map.
\end{proof}

The \emph{equivariant} Lefschetz map can carry information also
for non-compact spaces.  An analogue of
Corollary~\ref{cor:Lef_vanishes} holds in the equivariant case
for discrete~\(G\) provided for each finite subgroup
\(H\subseteq G\), the fixed point submanifold \(\{x\in X\mid
Hx=x\}\) has no compact components; it suffices to assume this
only for those components that contain a fixed point
of~\(\selfmap\).

\subsection{The Lefschetz formula: comparing the two computations}
\label{sec:Lef_formula}

\begin{theorem}
  \label{the:Lef_Fixed_Point}
  Let~\(G\) be a discrete group and let~\(X\) be a smooth
  manifold equipped with a proper action of~\(G\).  Let
  \(\selfmap\colon X\to X\) be a smooth map that satisfies the
  conditions of Theorem \textup{\ref{the:Lef_smooth}}.  Choose
  a \(G\)\nb-equivariant cellular decomposition of~\(X\), and
  let \(\selfmap'\colon X\to X\) be \(G\)\nb-homotopic
  to~\(\selfmap\) and cellular.  Then
  \begin{multline*}
    \sum_{\dot\sigma\in G\backslash SX}
    \mult(\Selfmap,\dot\sigma)[\xi_{\dot\sigma}]
    = \Lef(\selfmap') \\= \Lef(\selfmap) =
    r_Y \otimes_{\CONT_0(Y)} \sign(\ID_\nu- D_\nu\selfmap)
    \otimes_{\CONT_0(Y)} \Eul_Y.
  \end{multline*}
  Here \(SX\) denotes the set of cells in the decomposition,
  and the multiplicities \(\mult(\Selfmap,\dot\sigma)\) and the
  equivariant \(\K\)\nb-homology classes \([\xi_{\dot\sigma}]\)
  are as in Theorem~\textup{\ref{the:Lef_combinatorial}};
  \(Y=\Fix(\selfmap)\), \(r_Y\)~is the class of the restriction
  map \(\CONT_0(X)\to\CONT_0(Y)\), \(\nu\)~is the normal bundle
  of~\(Y\), and \(\Eul_Y\) is the equivariant
  \(\K\)\nb-homology class of the de Rham operator on~\(Y\) as
  in Theorem~\textup{\ref{the:Lef_smooth}}.
\end{theorem}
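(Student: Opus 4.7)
The plan is a splicing argument: the chain of equalities reduces to combining the two explicit computations already established with the dual-independence principle stressed in Section~\ref{sec:duals_Lefschetz}. The central point is that, for a fixed $G$-space~$X$, the Lefschetz map $\Lef$ is intrinsic to $X$ and does not depend on which Kasparov dual $(\dual,\Theta,D)$ is used to compute it. Consequently any two Kasparov duals that happen to be available for~$X$ -- here the combinatorial one associated with the chosen cellular decomposition and the smooth de Rham dual coming from the Riemannian structure -- must produce the same class in $\KK^G_0(\CONT_0(X),\UNIT)$.

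Concretely, the first equality is Theorem~\ref{the:Lef_combinatorial} applied to $\selfmap'$, computed via the combinatorial Kasparov dual. The third equality is Theorem~\ref{the:Lef_smooth} applied to $\selfmap$, computed via the smooth Riemannian dual. The middle equality $\Lef(\selfmap') = \Lef(\selfmap)$ follows at once from the observation made in Section~\ref{sec:duals_Lefschetz} that $\Lef$ depends only on the $G$-equivariant homotopy class of the self-map, since $\selfmap'$ is by hypothesis $G$-homotopic to~$\selfmap$. Once the two outer formulas are recognised as two computations of the same intrinsic invariant, the theorem is immediate.

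The main technical obstacle is bridging the gap between the cellular decomposition in the hypothesis and the coloured simplicial structure required by Theorem~\ref{the:Lef_combinatorial}. Since $G$ is discrete and acts properly on the smooth manifold~$X$, an equivariant triangulation exists by standard results; passing to its barycentric subdivision yields a $G$-invariant colouring by simplex dimension, so that the hypotheses of Theorem~\ref{the:Lef_combinatorial} are satisfied and the combinatorial Kasparov dual constructed in Section~\ref{sec:combinatorial_proof} becomes available alongside the smooth one. Equivariant cellular approximation then produces a cellular $\selfmap'$ that is $G$-equivariantly homotopic to $\selfmap$. With these verifications in place, no further computation is needed: the identification of the combinatorial and smooth formulas is forced by the uniqueness of $\Lef$ recalled in Section~\ref{sec:duals_Lefschetz} and established in \cite{Emerson-Meyer:Euler}, so the theorem follows by reading off Theorems~\ref{the:Lef_combinatorial} and~\ref{the:Lef_smooth} and invoking homotopy invariance in the middle.
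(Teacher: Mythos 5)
Your proof matches the paper's almost line by line: equivariant triangulation via Illman, colouring from barycentric subdivision, equivariant cellular approximation to obtain $\selfmap'$, dual-independence of $\Lef$ plus homotopy invariance for the middle equality, and then reading off Theorems~\ref{the:Lef_combinatorial} and~\ref{the:Lef_smooth} for the two outer formulas. The one small step you elide (which the paper states explicitly) is that the combinatorial sum $\sum_{\dot\sigma}\mult(\Selfmap,\dot\sigma)[\xi_{\dot\sigma}]$ is independent of the chosen $G$\nobreakdash-CW decomposition: the theorem allows an arbitrary equivariant cellular decomposition while Theorem~\ref{the:Lef_combinatorial} only covers coloured simplicial complexes, so one must note that replacing the given decomposition by a triangulation does not change the left-hand side.
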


\begin{proof}
  Since~\(G\) is discrete and acts properly, \(X\) carries a
  \(G\)\nb-equivariant triangulation
  (see~\cite{Illman:Equivariant_triangulations}).  For the same
  reasons, \(X\) admits a complete \(G\)\nb-invariant
  Riemannian metric.  The combinatorial Lefschetz invariant
  \(\sum_{\dot\sigma\in G\backslash SX}
  \mult(\Selfmap,\dot\sigma) [\xi_{\dot\sigma}]\) is
  independent of the cellular decomposition.  Hence we may
  compute it using the cellular structure underlying a
  triangulation (our duality approach does not work for general
  \(G\)\nb-CW-complexes).  Any \(G\)\nb-equivariant self-map of
  a \(G\)\nb-CW-complex is \(G\)\nb-equivariantly homotopic to
  a cellular \(G\)\nb-equivariant self-map, so
  that~\(\selfmap'\) exists.  We have
  \(\Lef(\selfmap')=\Lef(\selfmap)\) because the Lefschetz
  invariant is homotopy invariant.  Now combine the formulas in
  Theorems \ref{the:Lef_combinatorial}
  and~\ref{the:Lef_smooth}.
\end{proof}

This result is similar to the Lefschetz Fixed Point Formula
of~\cite{Lueck-Rosenberg:Lefschetz}.  There are two
differences: first, we allow~\(\selfmap\) to have non-isolated
fixed points and describe the local contributions of fixed
points differently even in the isolated case; secondly, we
compute an element of \(\KK^G_0(\CONT_0(X),\UNIT)\) instead of
the universal equivariant homology theory considered
in~\cite{Lueck-Rosenberg:Lefschetz}.

\section{Lefschetz invariants for simplicial complexes}
\label{sec:combinatorial_proof}

We first recall briefly the combinatorial Kasparov dual
in~\cite{Emerson-Meyer:Euler}.  This construction is suggested
by work of Gennadi Kasparov and Georges Skandalis
in~\cite{Kasparov-Skandalis:Buildings}.  In order to compute
Euler characteristics in~\cite{Emerson-Meyer:Euler} we
reconstructed the results of Kasparov and Skandalis in more
explicit terms.  We refer the reader to our previous article
for details and some notation.  We are going to use the
combinatorial Kasparov dual to compute the Lefschetz invariant
of a cellular self-map.  This is a matter of plugging all the
ingredients into~\eqref{eq:compute_Lef} and simplifying the
outcome.

\subsection{Description of the combinatorial dual}
\label{sec:combinatorial_dual}

Let~\(X\) be a simplicial complex and let~\(G\) be a locally
compact group acting simplicially on~\(X\).  Thus~\(G\)
permutes the simplices of~\(X\), and the stabiliser of each
simplex is an open subgroup of~\(G\).  Since this forces the
connected component of the identity in~\(G\) to act trivially,
the group~\(G\) will usually be totally disconnected, but we do
not need this assumption.  Let \(S_dX\) be the set of
\(d\)\nb-simplices in~\(X\) and \(SX=\bigsqcup S_dX\).  In
particular, \(S_0X\) is the set of vertices of~\(X\).

We assume throughout that~\(X\) is finite-dimensional, say of
dimension at most~\(n\).  Let \(\no \defeq \{1,\dotsc,n\}\).
An (\(n\)\nb-dimensional, \(G\)\nb-invariant)
\emph{colouring} on~\(X\) is a map \(\col\colon S_0X \to \no\)
that is \(G\)\nb-invariant and satisfies
\(\col(\sigma)\neq\col(\tau)\) whenever \(\sigma,\tau\in S_0X\)
are joined by an edge.  Equivalently, if \(\sigma\in SX\) is
any simplex, then~\(\col\) restricts to an injective map on the
set of vertices of~\(\sigma\).  Coloured simplicial complexes
are also called \emph{typed}.

The barycentric subdivision of an \(n\)\nb-dimensional
simplicial complex admits a canonical \(n\)\nb-dimensional
colouring, so that we may assume without loss of generality
that~\(X\) itself has such a colouring~\(\col\).
Since~\(\col\) is \(G\)\nb-invariant, a group element that
maps a simplex to itself must fix it pointwise.

We use the \(n\)\nb-dimensional affine space
\[
E \defeq \{ (t_0,\dotsc,t_n)\in\R^{n+1} \mid
t_0+\dotsb+t_n = 1\}
\]
and the standard \(n\)\nb-simplex
\[
\Sigma \defeq \{ (t_0,\dotsc,t_n)\in E \mid
\text{\(t_i\ge0\) for \(i=0,\dotsc,n\)}\}.
\]

For a non-empty subset \(f\subseteq\no\), we let
\(\Sigma_f\subseteq\Sigma\) be the corresponding face; its
points are characterised by \(t_j=0\) for \(j\notin f\).  This
subset is denoted by~\(\abs{f}\) in~\cite{Emerson-Meyer:Euler}.
The map \(f\mapsto \Sigma_f\) gives a bijection between the set
\(\poset\) of non-empty subsets of~\(\no\) and the set of faces
of~\(\Sigma\).

Let~\(\tau\) be a simplex in~\(X\) with vertices
\(v_0,\dotsc,v_d\), where \(d=\dim \tau\).  We also
write~\(\abs{\tau}\) for the corresponding subset of~\(X\).
Points in~\(\abs{\tau}\) can be described by barycentric
coordinates as \(t_0v_0+\dotsb+t_dv_d\) with
\(t_0,\dotsc,t_d\in\R_{\ge0}\) and \(t_0+\dotsb+t_d=1\).  Since
the colouring~\(\col\) is injective on the vertices
of~\(\tau\), the set
\[
\col(\tau) \defeq \{\col(v_0),\dotsc,\col(v_d)\}
\]
has \(d+1\) elements and hence determines a
\(d\)\nb-dimensional face \(\Sigma_{\col(\tau)}\)
of~\(\Sigma\).  Even more, \(\col\) induces a linear bijection
\[
\abs{\col}\colon \abs{\tau} \to \Sigma_{\col(\tau)},
\qquad t_0v_0+\dotsb+t_dv_d \mapsto
t_0 e_{\col(v_0)} + \dotsb +  t_d e_{\col(v_d)}.
\]
Here \(e_i\in\Sigma\subseteq E\) is the \(i\)th standard basis
vector.

We define
\begin{align*}
  R_f &\defeq \{t\in E \mid
  \text{\(t_j\ge0\) for \(j\in f\) and
    \(t_j\le 0\) for \(j\notin f\)}\}.\\
  R_{\le f} &\defeq \bigcup_{l\subseteq f} R_l =
  \{t\in E \mid
  \text{\(t_j\le 0\) for \(j\in \no\setminus f\)}\}.
\end{align*}

We introduce \emph{homogeneous coordinates} by letting
\[
[t_0:\dotsc:t_n] \defeq \frac{(t_0,\dotsc,t_n)}{t_0+\dotsb+t_n}
\qquad\text{for \((t_0,\dotsc,t_n)\in\R^{n+1}\) with
\(t_0+\dotsb+t_n\neq0\).}
\]
We define a retraction \(q\colon E\to\Sigma\) from~\(E\)
to~\(\Sigma\) by
\[
q(t) \defeq \bigl[\max(t_0,0):\dotsc:\max(t_n,0)\bigr]
\qquad
\text{for \(t= (t_0,\dotsc,t_n) \in E\).}
\]
Thus \(R_{\le f} = q^{-1}(\Sigma_f)\) for \(f\in\poset\).

Let \(\Comp \defeq \Comp(\ell^2SX)\).  If \(T\in\Comp\), let
\(T_{\sigma,\sigma'}\) for \(\sigma, \sigma'\in SX\) be its
matrix coefficients.

\begin{definition}
  \label{def:combinatorial_dual_algebra}
  The \emph{dual \Cstar{}algebra} of~\(X\) is defined by
  \[
  \dual \defeq \{\varphi \in \CONT_0(E,\Comp) \mid
  \text{\(\supp \varphi_{\sigma,\sigma'} \subseteq
    R_{\le\col(\sigma\cap\sigma')}\) for all
    \(\sigma,\sigma'\in SX\)}\}.
  \]
\end{definition}

\begin{example}
  \label{exa:combinatorial_dual_interval}
  Suppose~\(X\) is a single \(1\)\nb-simplex -- together with
  its two vertices.  Thus \(SX\) has three elements.  The dual
  \Cstar{}algebra is the algebra of \(3\times3\)-matrices with
  a pattern of entries of the form
  \[
  \begin{pmatrix}
    \CONT_0(\R_{<0})& \CONT_0(\R_{<0})& 0\\
    \CONT_0(\R_{<0})& \CONT_0(\R)& \CONT_0(\R_{>1})\\
    0& \CONT_0(\R_{>1})& \CONT_0(\R_{>1})
  \end{pmatrix}.
  \]
  This \Cstar{}algebra is Morita--Rieffel equivalent to
  \(\CONT_0(\R)\).  This is to be expected because \([0,1]\) is
  homotopy equivalent to the point, so that its dual should be,
  up to dimension shift, \(\KK\)-equivalent to~\(\UNIT\).
\end{example}

We return to the general case.  The pointwise multiplication
homomorphism \(\CONT_0(E)\otimes \dual\to\dual\)
turns~\(\dual\) into a \Cstar{}algebra over~\(E\).  To describe
the fibre at \(t\in E\), let \(f \defeq \{j\in\no\mid t_j\ge
0\}\); equivalently, this is the minimal subset of~\(\no\) for
which~\(t\) is contained in the interior of \(R_{\le f}\).  Let
\(SX_{\ge f} \defeq \{\sigma\in SX\mid \col(\sigma)\supseteq
f\}\) and define a relation on \(SX_{\ge f}\) by
\[
\sigma\sim_f \sigma'
\quad\iff\quad \col(\sigma\cap\sigma')\supseteq f.
\]
This is an equivalence relation because~\(\col\) is a
colouring.  The fibre of~\(\dual\) over~\(t\) is the
\Cstar{}algebra of the equivalence relation~\(\sim_f\) on
\(SX_{\ge f}\) or, equivalently,
\[
\Comp_f \defeq \{\varphi\in\Comp\mid
\text{\(\varphi_{\sigma,\sigma'}=0\) unless
  \(\col(\sigma\cap\sigma')\supseteq f\)}\}.
\]
This is a direct sum of matrix algebras with one summand for
each simplex in~\(X\) of colour~\(f\).

To be part of a Kasparov dual, \(\dual\) must be a
\Cstar{}algebra over~\(X\).  This structure can also be
described by a non-degenerate \(G\)\nb-equivariant
\Star{}homomorphism \(m\colon \CONT_0(X,\dual) \cong
\CONT_0(X)\otimes\dual\to\dual\).  We recall its description in
\cite{Emerson-Meyer:Euler}*{Equation (42)}:
\[
m(\varphi)_{\sigma\sigma'}(t) =
\varphi_{\sigma\sigma'}(\bar{q}_\sigma(t),t)
\]
for all \(\varphi\in \CONT_0(X,\dual)\), \(t\in E\), and,
\(\sigma,\sigma'\in SX\).  This involves the map
\[
\bar{q}\colon E\times SX\to X,
\qquad (t,\sigma)\mapsto \bar{q}_\sigma(t)
\]
which is defined as follows.  If
\(\varphi_{\sigma\sigma'}(\blank,t)\colon X\to\C\) is not
identically zero, then~\(t\) must lie in the interior of
\(R_{\le \col(\sigma\cap\sigma')} \subseteq R_{\le
  \col(\sigma)}\), so that we only need \(\bar{q}_\sigma(t)\)
for \(t\in R_{\le \col(\sigma)}\).  Then \(q(t) \in
\Sigma_{\col(\sigma)}\), which is identified with
\(\abs{\sigma}\subseteq X\) by the colouring.  We let
\(\bar{q}_\sigma(t)\) be the point of~\(\abs{\sigma}\) that
corresponds to \(q(t)\).

To compute the Lefschetz invariant of a self-map~\(\selfmap\),
we must combine the multiplication map \(m\colon
\CONT_0(X,\dual)\to\dual\) with~\(\selfmap\) (see
Lemma~\ref{lem:compute_T_on_map}).  The resulting
\Star{}homomorphism \(\mu_\selfmap\colon
\CONT_0(X)\otimes\dual\to\dual\) in~\eqref{eq:compute_Lef} is
given by
\begin{equation}
  \label{eq:combinatorial_mu_selfmap}
  \mu_\selfmap(\varphi)_{\sigma,\sigma'} (t) =
  \varphi \bigl(\selfmap\circ
  \bar{q}_\sigma(t), t\bigr)_{\sigma,\sigma'}.
\end{equation}

Next we describe the class \(D\in\KK^G_n(\dual,\UNIT)\) for the
Kasparov dual.  Let
\[
\beta_E \in \KK^G_n(\CONT_0(E), \UNIT)
\quad \text{and}\quad
\widehat{\beta}_E = \beta_E^{-1} \in
\KK^G_{-n}\bigl(\UNIT,\CONT_0(E)\bigr)
\]
be the Bott class and its inverse and let \(i\colon \dual \to
\CONT_0(E)\otimes \Comp\) be the obvious inclusion.  The latter
defines a class \([i]\in\KK^G_0\bigl(\dual,\CONT_0(E)\bigr)\)
because \(\KK^G\) is stable.  We let
\[
D \defeq [i]\otimes_{\CONT_0(E)} \beta_E \in
\KK^G_n(\dual,\UNIT).
\]

The final datum~\(\Theta\) is more involved and uses two
ingredients:
\begin{enumerate}
\item a \(G\)\nb-equivariant continuous map~\(v\) from~\(X\)
  to the space of unit vectors in \(\ell^2(SX)\), and

\item a family of \Star{}homomorphisms \(h_s!\colon
  \CONT_0(E)\to\CONT_0(E)\) for \(s\in E\).
\end{enumerate}

The map~\(v\) has two main features: first, the support of
\(v(x) \in \ell^2(SX)\) is the set of faces of~\(\sigma\) if
\(x\in\abs{\sigma}\); secondly, \(v(x)=\delta_\sigma\) if
\(x\in\abs{\sigma}\) and \(\abs{\col}(x)\in
CR_{\col(\sigma)}\), where the regions \(CR_f\) are defined by
\[
CR_f \defeq \{t\in\Sigma \mid
\text{\(t_i \ge L\) for \(i\in f\) and \(t_i \le L\)
for \(i\in\no\setminus f\)}\}
\]
with an auxiliary parameter \(L>0\).  The subsets \(CR_f\) for
\(f\in\poset\) cover~\(\Sigma\).  Each \(CR_f\) is a closed
polygonal neighbourhood of the barycentre of~\(\Sigma_f\)
in~\(\Sigma\).  The shape of these regions for \(n=2\) is
indicated in \cite{Emerson-Meyer:Euler}*{Figure~3}.

Passing from unit vectors to rank-\(1\)-projections, we get a
map
\[
P\colon X\to \Comp,
\qquad x\mapsto
\bigl\lvert v(x)\bigr\rangle \bigl\langle v(x)\bigr\rvert.
\]
Let \(x\in\abs{\sigma}\).  The properties of~\(v\) ensure two
things.  First, \(P(x)_{\tau,\tau'}=0\) unless both \(\tau\)
and~\(\tau'\) are faces of~\(\sigma\).  Secondly, if
\(\abs{\col}(x)\in CR_{\col(\sigma)}\), then
\(P(x)_{\tau,\tau'}=0\) unless \(\tau=\tau'=\sigma\), and
\(P(x)_{\sigma,\sigma}=1\).

The second ingredient for~\(\Theta\) is a family of
\Star{}homomorphisms
\[
h_s!\colon \CONT_0(E) \to \CONT_0(E)
\]
for \(s\in E\), which is constructed as follows.  Let
\[
B(\delta) \defeq \{t\in\R^{n+1}\mid
\text{\(t_0+\dotsb+t_n = 0\) and \(\abs{t_i}<\delta\) for
  \(j=0,\dotsc,n\)}\}.
\]
Fix \(\lambda > \frac{1}{1-(n+1)L}\) and let \(r_\lambda\colon
E\to E\) be the radial expansion by a factor of~\(\lambda\)
around the barycentre of~\(\Sigma\):
\[
r_\lambda(t_0,\dotsc,t_n) \defeq
\left(\lambda t_0 - \frac{\lambda-1}{n+1},\dotsc,
\lambda t_n - \frac{\lambda -1}{n+1}\right).
\]
By \cite{Emerson-Meyer:Euler}*{Lemma 27}, we can find
\(\delta>0\) such that
\begin{equation}
  \label{choiceofdelta}
  r_\lambda (s) + B(\delta) \subseteq R_{\le f}
  \qquad\text{for all \(f\in\poset\) and \(s\in CR_f\).}
\end{equation}

Let \(h\colon E \xrightarrow{\cong} B(\delta)\) be a fixed,
orientation-preserving diffeomorphism.  For \(s\in E\), we
define
\begin{equation}
  \label{eq:def_h_shriek}
  h_s!\colon \CONT_0(E)\to\CONT_0(E),
  \qquad
  h_s!(\varphi)(t) \defeq
  \begin{cases}
    \varphi\bigl(h^{-1}(t-s)\bigr)
    &\text{for \(t-s\in B(\delta)\),}\\
    0
    &\text{otherwise.}\\
  \end{cases}
\end{equation}
The definitions above ensure that
\begin{equation}
  \label{eq:def_theta}
  \vartheta_x(\varphi) \defeq
  h_{\abs{\col}(x)}!(\varphi)\otimes P(x)
\end{equation}
is a \Star{}homomorphism from \(\CONT_0(E)\) to~\(\dual\) for
all \(x\in X\).  Letting~\(x\) vary, we get a \(G\ltimes
X\)-equivariant \Star{}homomorphism
\[
\vartheta\colon \CONT_0(X\times E) \cong
\CONT_0(X)\otimes \CONT_0(E) \to
\CONT_0(X)\otimes\dual \cong \CONT_0(X,\dual).
\]
This yields \([\vartheta] \in \RKK^G_0(X; \CONT_0(E),\dual)\).

Finally, \(\Theta\in\RKK^G_{-n}(X;\UNIT,\dual)\) is defined by
\[
\Theta \defeq \widehat{\beta}_E \otimes_{\CONT_0(E)} [\vartheta],
\]
where \(\widehat{\beta}_E \in \KK^G_{-n}(\UNIT,\CONT_0(E))\) is
the generator of Bott periodicity.

The data \((\dual,\Theta,D)\) defined above is a
\(G\)\nb-equivariant Kasparov dual for~\(X\) of dimension
\(-n\) by \cite{Emerson-Meyer:Euler}*{Theorem 29}.

\subsection{Computing the Lefschetz invariant}
\label{sec:combinatorial_compute_Lefschetz}

We now compute the Lefschetz invariant of a
\(G\)\nb-equivariant self-map \(\selfmap\colon X\to X\) using
the simplicial dual described above.  The map~\(\selfmap\) is
\(G\)\nb-equivariantly homotopic to a \(G\)\nb-equivariant
cellular map, that is, a map that preserves the filtration
of~\(X\) by skeleta.  Even more, by an equivariant version of
the Simplicial Approximation Theorem, any self-map is
\(G\)\nb-equivariantly homotopic to a \(G\)\nb-equivariant
simplicial map \(\selfmap'\colon \Sd X \to X\), where \(\Sd X\)
denotes a sufficiently fine \(G\)\nb-invariant subdivision
of~\(X\) (since we allow the subdivision to get finer near
infinity, this even works if~\(G\) does not act cocompactly
on~\(X\)).  Since the Lefschetz invariant of a self-map is
homotopy invariant, we may assume that~\(\selfmap\) has this
special form from now on.

The map~\(\selfmap\) is a cellular map with respect to the
cellular decomposition underlying the original simplicial
structure on~\(X\), so that it induces a cellular chain map
\(\Selfmap\colon C_\bullet(X)\to C_\bullet(X)\).  The matrix
coefficient \(\Selfmap_{\sigma\tau}\) of~\(\Selfmap\) for two
simplices \(\sigma\),~\(\tau\) counts how many of the simplices
in the subdivision of~\(\tau\) are mapped onto~\(\sigma\), with
a sign depending on whether the map
\(\selfmap|_{\abs{\tau}}\colon \abs{\tau}\to\abs{\sigma}\)
preserves or reverses orientation.

The starting point for our computation
is~\eqref{eq:compute_Lef}.  We must plug in the ingredients we
have constructed above.  This yields
\begin{multline}
  \label{eq:combinatorial_Lef_1}
  \Lef(\selfmap) = \overline{\Theta} \otimes_{\CONT_0(X,\dual)}
  [\mu_\selfmap]\otimes_\dual D
  \\= \widehat\beta_E \otimes_{\CONT_0(E)} [\vartheta]
  \otimes_{\CONT_0(X,\dual)} [\mu_\selfmap]
  \otimes_\dual [i] \otimes_{\CONT_0(E)} \beta_E
\end{multline}
in \(\KK^G_0(\CONT_0(X), \UNIT)\).  Here \(\beta_E\)
and~\(\widehat\beta_E\) are the Bott class and its inverse, and
the remaining ingredients are given by \Star{}homomorphisms
\[
\CONT_0(X)\otimes\CONT_0(E) \xrightarrow{\vartheta}
\CONT_0(X,\dual) \xrightarrow{\mu_\selfmap}
\dual \xrightarrow{i}
\CONT_0(E,\Comp).
\]
The maps \(\vartheta\) and~\(\mu_\selfmap\) are described in
\eqref{eq:def_theta} and~\eqref{eq:combinatorial_mu_selfmap},
the map~\(i\) is just the embedding.  The Kasparov product for
\Star{}homomorphisms agrees with the usual composition.  Hence
\begin{equation}
  \label{eq:combinatorial_Lef_2}
  \Lef(\selfmap) =
  \widehat\beta_E \otimes_{\CONT_0(E)}
  [\Xi_\selfmap]
  \otimes_{\CONT_0(E)} \beta_E
\end{equation}
with the \(G\)\nb-equivariant \Star{}homomorphism
\[
\Xi_\selfmap \defeq i\circ\mu_\selfmap\circ\vartheta\colon
\CONT_0(X\times E)\to\CONT_0(E)\otimes\Comp(\ell^2SX).
\]
Equations \eqref{eq:def_theta}
and~\eqref{eq:combinatorial_mu_selfmap} yield
\begin{equation}
  \label{eq:xi}
  \Xi_\selfmap(\varphi)_{\sigma,\sigma'}(t) =
  \varphi \Bigl( \selfmap\circ\bar{q}_\sigma(t),
  h^{-1}\bigl(t-r_\lambda \circ \abs{\col} \circ \selfmap \circ
  \bar{q}_\sigma(t)\bigr)\Bigr)
  \cdot P_{\sigma,\sigma'} \bigl(\selfmap\circ\bar{q}_\sigma(t)\bigr)
\end{equation}
for all \(\sigma,\sigma'\in SX\), \(t\in E\), \(\varphi\in
\CONT_0(X\times E)\); this is understood to be~\(0\) unless
\(t-r_\lambda \circ \abs{\col} \circ \selfmap \circ
\bar{q}_\sigma(t) \in B(\delta)\).

\begin{lemma}
  \label{lem:simplices_in_xi}
  Let \(\varphi \in \CONT_0(X\times E)\), \(t\in E\), and
  \(\sigma,\sigma'\in SX\), and let \(x\defeq \selfmap\circ
  \bar{q}_\sigma(t)\).  Assume that
  \(\Xi_\selfmap(\varphi)_{\sigma,\sigma'}(t) \neq 0\).  Then:
  \begin{enumerate}
  \item \(t\in B(\delta) + r_\lambda \circ \abs{\col}(x)\),

  \item \(\sigma = \sigma'\),

  \item \(\abs{\col}(x) \in \Sigma_{\col(\sigma)} \cap
    CR_{\col(\sigma)}\),

  \item \(t\) belongs to the interior of \(R_{\col(\sigma)}\).

  \end{enumerate}
\end{lemma}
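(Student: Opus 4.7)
I would deduce all four claims by unpacking formula~\eqref{eq:xi} factor by factor. Set $y \defeq \bar{q}_\sigma(t) \in |\sigma|$, which is well-defined because $\Xi_\selfmap(\varphi)_{\sigma,\sigma'}(t) \neq 0$ forces $t \in R_{\le \col(\sigma)}$ (by compatibility of the $X$\nb-structure on $\dual$), and put $x \defeq \selfmap(y)$ and $s \defeq \abs{\col}(x) \in \Sigma$.

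Claim~(1) is then immediate: the factor $\varphi\bigl(x, h^{-1}(t - r_\lambda(s))\bigr)$ in~\eqref{eq:xi} vanishes unless $t - r_\lambda(s) \in B(\delta)$, by the defining formula~\eqref{eq:def_h_shriek} of $h_s!$.

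For claims~(2) and~(3), nonvanishing of $P_{\sigma,\sigma'}(x)$ forces both $\sigma$ and $\sigma'$ into $\supp v(x)$; by the first property of $v$ they must be faces of the minimal simplex $\tau$ containing $x$ in its interior, so $s \in \Sigma_{\col(\tau)}$ and $\col(\sigma), \col(\sigma') \subseteq \col(\tau)$. Since the regions $CR_f$ cover $\Sigma$, pick $f \subseteq \col(\tau)$ with $s \in CR_f$; combining~\eqref{choiceofdelta} with~(1) gives $t \in R_{\le f}$, and intersecting with $t \in R_{\le \col(\sigma)}$ yields $t \in R_{\le f \cap \col(\sigma)}$. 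I would then invoke the refinement of the second property of $v$ from~\cite{Emerson-Meyer:Euler}---whereby the unit vector $v(x)$ is $\delta_\pi$ for the face $\pi \preceq \tau$ singled out by the region $CR_{\col(\pi)}\ni s$---to conclude that nonvanishing of $P_{\sigma,\sigma'}(x)$ forces $\sigma = \sigma' = \pi$ and $\col(\pi) = \col(\tau)$, hence $\tau = \sigma$ and $s \in \Sigma_{\col(\sigma)} \cap CR_{\col(\sigma)}$. This is~(3), and~(2) follows because $v(x) = \delta_\sigma$ makes $P(x)$ the rank-one projection onto $\delta_\sigma$, so $P(x)_{\sigma,\sigma'} = \delta_{\sigma,\sigma'}$.

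Finally, claim~(4) is a direct estimate once (3) is in hand: $s \in CR_{\col(\sigma)} \cap \Sigma_{\col(\sigma)}$ together with the explicit formula for $r_\lambda$ yields $r_\lambda(s)_i < 0$ strictly for $i \notin \col(\sigma)$ (using $s_i = 0$ and $\lambda > 1$) and a strict sign on $\col(\sigma)$ guaranteed by the choice $\lambda > 1/(1-(n+1)L)$; the inclusion $t - r_\lambda(s) \in B(\delta)$ from~(1), with $\delta$ chosen small enough to satisfy~\eqref{choiceofdelta}, then places $t$ in the interior of $R_{\col(\sigma)}$. The principal obstacle is the step forcing $\tau = \sigma$ in~(3); this hinges on the precise interpolating behaviour of $v$ between the regions $CR_f$, which the excerpt only summarises through its two salient features, so I would import the full construction of $v$ from~\cite{Emerson-Meyer:Euler} to make the argument rigorous.
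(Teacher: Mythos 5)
Your proposal handles claim~(1) correctly, but has two genuine gaps in claims~(2)--(4), both traceable to the same omission: you never use the \emph{cellularity} of $\selfmap$, which is the central ingredient in the paper's argument.

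For claims~(2) and~(3), the paper exploits cellularity as follows. From $t \in B(\delta) + r_\lambda\bigl(\abs{\col}(x)\bigr) \subseteq R_{\le f}$ (via \cite{Emerson-Meyer:Euler}*{Lemma 27}), one deduces $q(t)\in\Sigma_f$, so $\bar{q}_\sigma(t)$ lies in the $(\dim f)$\nb-skeleton of~$X$. Because $\selfmap$ is cellular, $x=\selfmap\circ\bar{q}_\sigma(t)$ lies in the same skeleton, so $s=\abs{\col}(x)$ has at most $\dim f + 1$ nonzero coordinates. Combined with $s\in CR_f$, which forces $\dim f +1$ coordinates to be $\ge L$, this pins down $s\in\Sigma_f\cap CR_f$, whence $P(x)$ is the rank-one projection onto $\delta_\sigma$ where $\col(\sigma)=f$. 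Your route instead invokes a ``refinement of the second property of~$v$'' asserting $v(x)=\delta_\pi$ for a face $\pi\preceq\tau$ with $s\in CR_{\col(\pi)}$. But the property the paper actually states requires $x\in\abs{\sigma}$, and if $x$ lies in the interior of~$\tau$ then $x\notin\abs{\pi}$ for any proper face $\pi\subsetneq\tau$, so the property does not apply. You then assert $\col(\pi)=\col(\tau)$ with no justification; that equality is precisely what the cellularity argument supplies, and your argument has no substitute for it.

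For claim~(4), the direct estimate on $r_\lambda$ fails. For $i\in\col(\sigma)$ with $s_i=L$ (permitted since $s\in CR_{\col(\sigma)}$ only forces $s_i\ge L$), one computes
\[
r_\lambda(s)_i = \lambda L - \frac{\lambda-1}{n+1}
= \frac{1-\lambda\bigl(1-(n+1)L\bigr)}{n+1} < 0
\qquad\text{whenever } \lambda > \tfrac{1}{1-(n+1)L},
\]
so the choice of $\lambda$ gives you the \emph{wrong} sign on $\col(\sigma)$; the estimate you need does not hold. The paper instead argues by contradiction: if $t_i\le 0$ for some $i\in\col(\sigma)$, then $q(t)$ lies in a proper face of $\Sigma_{\col(\sigma)}$, hence $\bar q_\sigma(t)$ lies in the $(\dim\sigma-1)$-skeleton, and so does $x$ (cellularity again), contradicting~(3) which places $x$ in the interior of~$\sigma$. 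Without cellularity, claim~(4) cannot be recovered by your method.
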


\begin{proof}
  The first claim follows immediately from the definition
  of~\(h_s!\) in~\eqref{eq:def_h_shriek}.  The next two
  properties are more interesting.

  Choose \(f\subseteq\no\) with \(\abs{\col}(x)\in CR_f\).  We
  must have \(t\in B(\delta) +
  r_\lambda\bigl(\abs{\col}(x)\bigr) \subseteq R_{\le f}\) by
  \cite{Emerson-Meyer:Euler}*{Lemma 27}.  Thus \(q(t)\in
  \Sigma_f\) and hence \(\bar{q}_\sigma(t)\) belongs to the
  \(d\)\nb-skeleton of~\(X\) with \(d\defeq\dim f =
  \abs{f}-1\).

  Since~\(\selfmap\) is cellular, \(x=
  \selfmap\bigl(\bar{q}_\sigma(t)\bigr)\) also belongs to the
  \(d\)\nb-skeleton.  Hence the point \(s\defeq \abs{\col}(x)
  \in \Sigma\) belongs to some \(d\)\nb-dimensional face
  of~\(\Sigma\).  Thus at most \(d+1\) of its coordinates
  \(s_0,\dotsc,s_n\) can be non-zero.  But \(s\in CR_f\) means
  that \(s_j\ge L\) for \(j\in f\), providing \(d+1\) non-zero
  coordinates.  Hence \(s_j\ge L\) for \(j\in f\) and \(s_j=0\)
  for \(j\in \no\setminus f\) or, equivalently, \(s\in
  \Sigma_f\cap CR_f\).  But then \(P(x)\) is the projection
  onto the basis vector~\(\delta_\tau\), where~\(\tau\) is the
  unique simplex in~\(X\) with \(x\in \tau\) and
  \(\col(\tau)=f\).  Thus \(\tau= \sigma=\sigma'\) and
  \(f=\col(\sigma)\).  We also get \(\abs{\col}(x) =
  s\in\Sigma_{\col(\sigma)} \cap CR_{\col(\sigma)}\) as
  asserted in (3).  We have already seen above that
  \(t=(t_0,\dotsc,t_n)\) must belong to
  \(R_{\le\col(\sigma)}\), that is, \(t_i\le0\) for
  \(i\in\no\setminus f\).  If \(t_i\le0\) for some \(i\in f\),
  then \(\bar{q}_\sigma(t)\) belongs to the \(d-1\)-skeleton
  of~\(X\), which is impossible.  Hence \(t_i>0\) for all
  \(i\in f\).  Furthermore, \(t\in B(\delta) +
  r_\lambda(\Sigma_f)\) implies \(t_i<0\) for \(i\in
  \no\setminus f\).  Thus~\(t\) is an interior point
  of~\(R_f\).
\end{proof}

As a result, the range of~\(\Xi_\selfmap\) is contained in the
\Cstar{}subalgebra
\[
\CONT_0(E\times SX) \cong
\CONT_0(E)\otimes \CONT_0(SX)\subseteq
\CONT_0(E)\otimes \Comp(\ell^2SX)
\]
of all operators that are diagonal on \(\ell^2(SX)\) in the
standard basis.  Let
\[
\Xi_{\selfmap,\sigma}\colon \CONT_0(X\times E)\to \CONT_0(E)
\]
be the value at \(\sigma\in SX\), so that \(\Xi_\selfmap =
(\Xi_{\selfmap,\sigma})_{\sigma\in SX}\).

Given \(t\in E\) and \(\sigma\in SX\), we define \(y\defeq
\bar{q}_\sigma(t)\) and \(x\defeq \selfmap(y) =
\selfmap\circ\bar{q}_\sigma(t)\) and let \(f\defeq
\col(\sigma)\).  Lemma~\ref{lem:simplices_in_xi} yields
\(\Xi_{\selfmap,\sigma}(\varphi)(t) = 0\) unless
\(x\in\abs{\sigma}\) and \(\abs{\col}(x) \in CR_f\).  If
\(x\in\abs{\sigma}\) and \(\abs{\col}(x) \in CR_f\), then
\[
\Xi_{\selfmap,\sigma}(\varphi)(t) =
\varphi \Bigl(x, h^{-1}\bigl(t-r_\lambda \circ
\abs{\col}(x)\bigr)\Bigr)
\]
because \(v(x) = \delta_\sigma\).  We have \(y\in\abs{\sigma}\)
because the range of~\(\bar{q}_\sigma\) is contained
in~\(\sigma\).  Thus \(\Xi_{\selfmap,\sigma}(\varphi)\)
vanishes identically unless there is \(y\in\abs{\sigma}\) with
\(\selfmap(y)\in\abs{\sigma}\).

If \(x\in\abs{\sigma}\), then there is a canonical linear
homotopy from~\(x\) to the barycentre~\(\xi_\sigma\).
Deforming \(\Xi_{\selfmap,\sigma}\) along this homotopy, we get
the map \(\Xi'_{\selfmap,\sigma}\colon \CONT_0(X\times E)\to
\CONT_0(E)\) defined by
\[
\Xi'_{\selfmap,\sigma}(\varphi)(t) =
\varphi \Bigl(\xi_\sigma, h^{-1}\bigl(t-r_\lambda \circ
\abs{\col}(x)\bigr)\Bigr)
\]
for \(x\in\abs{\sigma}\) and \(\abs{\col}(x) \in CR_f\),
and~\(0\) otherwise.  Hence the families of maps
\((\Xi_{\selfmap,\sigma})_{\sigma\in SX}\) and
\((\Xi'_{\selfmap,\sigma})_{\sigma\in SX}\) define the same
class in \(\KK^G_0\bigl(\CONT_0(X\times E),\CONT_0(E)\bigr)\).

The map \(\Xi'_{\selfmap,\sigma}\) is an exterior product of
the evaluation map
\[
\CONT_0(X)\to\UNIT,
\qquad\varphi\mapsto\varphi(\xi_\sigma),
\]
and a certain endomorphism \(\Xi''_{\selfmap,\sigma}\) of
\(\CONT_0(E)\).  Since~\(\selfmap\) is \(G\)\nb-equivariant,
\(\Xi''_{\selfmap,\sigma}\) only depends on the orbit
\(\dot\sigma\defeq G\sigma\) of~\(\sigma\).  Hence we get
\[
\Lef(\selfmap) =
\sum_{\dot\sigma \in G\backslash SX} [\xi_{\dot\sigma}]\otimes
(\beta_E\otimes_{\CONT_0(E)} [\Xi''_{\selfmap,\dot\sigma}]
\otimes_{\CONT_0(E)} \widehat{\beta}_E)
\]
with \([\xi_{\dot\sigma}]\) as defined in~\eqref{eq:dot_xi}.
Since \(\KK_0\bigl(\CONT_0(E),\CONT_0(E)\bigr) \cong \Z\) by
Bott periodicity, the class \([\Xi''_{\selfmap,\dot\sigma}]\)
cannot contribute more than a multiplicity.  This is determined
by the following lemma:

\begin{lemma}
  \label{lem:combinatorial_mult}
  \([\Xi''_{\selfmap,\dot\sigma}] =
  \mult(\Selfmap,\dot\sigma)\cdot [\ID]\) in
  \(\KK_0\bigl(\CONT_0(E),\CONT_0(E)\bigr)\) with the number
  \(\mult(\Selfmap,\dot\sigma)\) defined in Notation
  \textup{\ref{note:mult_gamma}}.
\end{lemma}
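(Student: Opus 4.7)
The plan is to interpret $\Xi''_{\selfmap,\sigma}$ as pullback along a proper self\nb-map of~$E$, identify its $\KK$\nb-class with an integer degree via Bott periodicity, and then compute that degree as a signed count of zeros, one per subsimplex of~$\sigma$ in a suitable simplicial approximation of~$\selfmap$.

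First I would read off from~\eqref{eq:xi} and~\eqref{eq:def_h_shriek} that $\Xi''_{\selfmap,\sigma}$ is the pullback by the map $t\mapsto h^{-1}(F(t))$, where
\[
F(t) \defeq t - r_\lambda\bigl(\abs{\col}\bigl(\selfmap(\bar{q}_\sigma(t))\bigr)\bigr)
\]
is defined on the support identified by Lemma~\ref{lem:simplices_in_xi}. Since $h^{-1}$ is an orientation\nb-preserving diffeomorphism and $\KK_0(\CONT_0(E),\CONT_0(E)) \cong \Z$ is generated by $[\ID]$ (Bott periodicity), the class $[\Xi''_{\selfmap,\sigma}]$ equals the degree of~$F$ viewed as a proper map (extended by~$\infty$ outside its support).

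Next I would locate the zeros of~$F$. Using product coordinates $(s,w) \in \Sigma_f \times W_f$ on $R_f$, with $f \defeq \col(\sigma)$, the map splits as $F_f(s,w) = S(w)s - \lambda\tilde\selfmap(s) + \tfrac{\lambda-1}{n+1}\mathbf{1}_f$ and $F_{\no\setminus f}(w) = w + \tfrac{\lambda-1}{n+1}\mathbf{1}_{\no\setminus f}$, where $\tilde\selfmap \defeq \abs{\col}\circ\selfmap\circ\abs{\col}^{-1}$ is defined on the locus in~$\Sigma_f$ on which $\selfmap$ sends $\abs{\sigma}$ back into $\abs{\sigma}$. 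Setting $F=0$ forces $w_0 = -\tfrac{\lambda-1}{n+1}\mathbf{1}_{\no\setminus f}$ and reduces to the affine fixed-point equation $\tilde\selfmap(s) = G(s)$ for an explicit affine contraction $G\colon \Sigma_f\to\Sigma_f$ fixing the barycentre and with $G(\Sigma_f)\subseteq CR_f$. After replacing~$\selfmap$ by a $G$\nb-equivariant simplicial approximation on a sufficiently fine subdivision, the $d$\nb-subsimplices $\tau_1,\dotsc,\tau_N$ of~$\sigma$ mapped affinely onto~$\sigma$ enumerate the zeros of~$F$: each yields a unique solution $(s_i,w_0)$ in the interior of $\tau_i\times\{w_0\}$, and the orientation sign $\epsilon_i\in\{\pm 1\}$ of $\selfmap|_{\tau_i}$ satisfies $\Selfmap_{\sigma\sigma} = \sum_i \epsilon_i$.

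Finally I would compute the local degree at each zero. The Jacobian of~$F$ in $(s,w)$ coordinates is block triangular with diagonal blocks $S_f I - \lambda D\tilde\selfmap|_{\tau_i}$ and $I_{n-d}$, so the local sign equals $\sign\det(S_f I - \lambda D\tilde\selfmap|_{\tau_i})$; the target value $(-1)^d\epsilon_i$ would then give $\deg F = (-1)^d\Selfmap_{\sigma\sigma} = \mult(\Selfmap,\dot\sigma)$. The principal obstacle is precisely this sign calculation for a generic affine bijection $\tilde\selfmap|_{\tau_i}\colon \tau_i\to\Sigma_f$: individual real eigenvalues of the linear part $A_i$ need not dominate $S_f/\lambda$ even though $\abs{\det A_i} = \mathrm{vol}(\Sigma_f)/\mathrm{vol}(\tau_i)\ge 1$. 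I would bypass this by invoking homotopy invariance of $[\Xi''_{\selfmap,\sigma}]$: equivariantly deforming~$\selfmap$ within its homotopy class so that each~$A_i$ becomes an orthogonal signed permutation preserves the $\epsilon_i$, hence preserves $\Selfmap_{\sigma\sigma}$; after this deformation all eigenvalues of~$A_i$ are $\pm 1$, whereupon $\det(S_f I - \lambda A_i) = (S_f-\lambda)^k(S_f+\lambda)^{d-k}$ with $k$ the number of $+1$ eigenvalues, and $\sign\det = (-1)^k = (-1)^d\epsilon_i$ because $\epsilon_i = (-1)^{d-k}$ and $S_f<\lambda$ for every valid~$\lambda$.
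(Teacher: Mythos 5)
The overall strategy you take — interpret $\Xi''_{\selfmap,\sigma}$ as a proper self\nobreakdash-map of~$E$, identify its $\KK$\nobreakdash-class with a degree via Bott periodicity, locate the zeros via an affine fixed\nobreakdash-point argument indexed by the subsimplices $\tau_1,\dotsc,\tau_N$ of~$\sigma$ mapped onto~$\sigma$, and sum local signs — is exactly the paper's. The problem is the final step, the sign computation at each zero, where you take a wrong turn.

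Your proposed fix for the ``principal obstacle'' is flawed in two distinct ways. First, the assertion that after deforming each~$A_i$ to an orthogonal signed permutation ``all eigenvalues of~$A_i$ are $\pm 1$'' is false: a signed permutation matrix built from a $k$\nobreakdash-cycle with $k\ge 3$ has eigenvalues which are $k$\nobreakdash-th roots of~$\pm 1$, hence genuinely complex; only when every cycle has length~$\le 2$ are the eigenvalues real. Second, and more fundamentally, the claimed equivariant deformation is not justified and in general not available. The linear parts~$A_i$ are determined by the geometry of the subsimplices $\tau_i\subseteq\Sigma_f$ in a fixed subdivision; to replace~$A_i$ by a (scaled) orthogonal transformation you would need $\tau_i$ to be \emph{similar} to $\Sigma_f$, which a generic subdivision does not provide. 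Deforming~$\selfmap$ to force this would destroy the simpliciality and the disjoint decomposition of the support of $\Xi''_{\selfmap,\sigma}$ into the pieces~$\alpha_j$ that the whole argument rests on.

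The obstacle you raise is in fact not one, and the resolution is a short algebraic trick the paper uses that makes no reference to eigenvalues individually. Writing $A_0$ for the linear part of the full affine map $A = \bar{q}_\sigma^{-1}\circ r_\lambda\circ\abs{\col}\circ\selfmap$ (which absorbs the dilation~$r_\lambda$ and the rescaling in~$\bar{q}_\sigma$), the local sign is $\sign\det(\ID - A_0)$. Factor $\ID - A_0 = -A_0(\ID - A_0^{-1})$. Because $A^{-1}$ maps $\abs{\sigma}$ into the interior of $\abs{\tau_j}\subseteq\abs{\sigma}$ (this uses $\lambda>1$ essentially), the spectral radius of~$A_0^{-1}$ is strictly less than~$1$; hence $\det(\ID - tA_0^{-1})\neq 0$ for all $t\in[0,1]$ and so $\sign\det(\ID - A_0^{-1}) = \sign\det(\ID) = 1$. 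Therefore $\sign\det(\ID - A_0) = \sign\det(-A_0) = (-1)^d\sign\det(A_0)$, which is $(-1)^d\varepsilon_j$. This factorization handles arbitrary eigenvalues — real, complex, of any modulus $>1$ — in one stroke, and is the ingredient your proposal is missing.
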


The proof of Lemma~\ref{lem:combinatorial_mult} will finish the
proof of Theorem~\ref{the:Lef_combinatorial} and will occupy
the remainder of this section.

Fix \(\sigma\in SX\) and let \(d\defeq \dim\sigma\) and
\(f\defeq \col(\sigma) \in \poset\).  Let
\[
D_\sigma \defeq
\{x\in \abs{\sigma} \mid \abs{\col}(x)\in CR_f\}.
\]
This is a closed neighbourhood of the barycentre of~\(\sigma\)
that does not intersect the boundary of~\(\sigma\).  Let
\[
D'_\sigma \defeq \{y\in\abs{\sigma}\mid \selfmap(y)\in D_\sigma\}.
\]

Recall that~\(\selfmap\) is a simplicial map \(\Sd X\to X\) for
some subdivision of~\(X\).  Let \(\Sd \sigma\) be the set of
\(d\)\nb-dimensional simplices in the subdivision
of~\(\sigma\).  If \(\tau\in \Sd\sigma\), then either~\(\selfmap\)
maps~\(\tau\) bijectively onto~\(\sigma\) or
\(\abs{\selfmap(\tau)}\cap \abs{\sigma} \subseteq
\partial\abs{\sigma}\).  Hence~\(\abs{\tau}\) and \(D'_\sigma\)
intersect if and only if \(\selfmap(\tau)=\sigma\).  As a result,
\(D'_\sigma\) is a disjoint union of homeomorphic copies of
\(CR_f\), one for each simplex \(\tau\in \Sd\sigma\) with
\(\selfmap(\tau)=\sigma\).  Let \(\tau_1,\dotsc,\tau_k\) be a list
of these simplices.  For \(j=0,\dotsc,k\), define
\(\alpha_j\colon \CONT_0(E) \to \CONT_0(E)\)
by
\[
\alpha_j(\varphi)(t)\defeq
\begin{cases}
  \varphi \Bigl(h^{-1}\bigl(t-r_\lambda \circ
  \abs{\col}\circ \selfmap\circ \bar{q}_\sigma(t)\bigr)\Bigr)
  &\text{if \(\bar{q}_\sigma(t) \in \tau_j \cap D_\sigma'\),}\\
  0&\text{otherwise.}
\end{cases}
\]
Since the supports of the maps~\(\alpha_j\) are disjoint, we
get
\[
[\Xi''_{\selfmap,\dot\sigma}] =
[\alpha_0] + \dotsb + [\alpha_k].
\]
Thus it remains to check that \([\alpha_j] =
(-1)^d\varepsilon_j\), where \(\varepsilon_j=\pm1\) depending
on whether \(\selfmap|_{\abs{\tau_j}}\colon
\abs{\tau_j}\to\abs{\sigma}\) preserves or reverses
orientation.

We claim that the fixed point equation
\[
t= r_\lambda\circ \abs{\col}\circ \selfmap \circ
\bar{q}_\sigma(t)
\]
has a solution \(t= (t_0,\dotsc,t_n)\) in the interior
of~\(R_f\) with \(\bar{q}_\sigma(t)\in \abs{\tau_j} \cap
D_\sigma'\).  In particular, \(\alpha_j\) is not identically
zero.

We may assume right away that \(t=r_\lambda(s)\) for some
\(s\in CR_f\cap \Sigma_f\).  Thus
\begin{alignat*}{2}
  t_i&>0&\qquad&\text{for \(i\in f\), and}\\
  t_i&=-\frac{\lambda-1}{n+1} &\qquad&
  \text{for \(i\in\no\setminus f\).}
\end{alignat*}
Although~\(\bar{q}_\sigma\) is non-linear in general because of
the homogeneous coordinates involved, its restriction to points
of this special form \emph{is} linear: it simply annihilates
the coefficients~\(t_i\) for \(i\notin f\) and rescales the
others by the constant
\[
\frac{(n+1)\lambda}{(d+1)\lambda + (n-d)}
\]
to get a point in~\(E\).  Furthermore, \(\bar{q}_\sigma\) is
invertible between the relevant \(d\)\nb-dimensional
subspaces.  We consider this restriction in the following when
we speak of \(\bar{q}_\sigma^{-1}\).  Thus our fixed point
problem is equivalent to finding \(u\defeq \bar{q}_\sigma(t)
\in \abs{\tau_j} \cap D_\sigma'\) with
\[
u = \bar{q}_\sigma^{-1} \circ
r_\lambda \circ \abs{\col}\circ\selfmap(u).
\]

The map \(A\defeq \bar{q}_\sigma^{-1} \circ r_\lambda \circ
\abs{\col}\circ\selfmap\) is \emph{affine} on
\(\abs{\tau_j}\cap D_\sigma'\) because each of the factors is
affine on the relevant subsets.  Furthermore, \(A\) is
expansive in all directions: the pre-image of~\(\abs{\sigma}\)
is contained in the interior of \(\abs{\tau_j}\subseteq
\abs{\sigma}\).  Hence its inverse map~\(A^{-1}\) is uniformly
contractive.  Thus \(A^{-n}(s)\) converges towards a fixed
point of~\(A\).  This shows that a fixed point exists.

Bott periodicity implies that the inclusion map
\(\CONT_0\bigl(B(\delta)\bigr)\to \CONT_0(\R^n)\) is a
\(\KK\)-equivalence for all \(\delta>0\).  Hence we can compute
the class of~\(\alpha_j\) by restricting it to an arbitrarily
small ball around some point.  Near the fixed point~\(t\)
constructed above, the map
\[
t'\mapsto
h^{-1}\bigl(t'- r_\lambda\circ \abs{\col}\circ \selfmap \circ
\bar{q}_\sigma(t')\bigr)
\]
is smooth.  Hence \([\alpha_j]=\pm1\) in
\(\KK_0\bigl(\CONT_0(E),\CONT_0(E)\bigr)\), where the sign is
the sign of the determinant of the derivative of the above map.
Since~\(h\) preserves orientation, we may omit it without
changing the sign.  In the directions orthogonal to
\(r_\lambda(\Sigma_f)\), the above map acts identically, so
that they contribute no sign either.  Thus the sign is the same
as the sign of the linear map \(\ID-A_0\), where~\(A_0\) is the
linear part of the affine map~\(A\) above.  Since
\(\norm{A_0^{-1}}<1\) and \(\ID-A_0 = -A_0\cdot
(\ID-A_0^{-1})\), this is the same as the sign of \(\det(-A_0)
= (-1)^d \det(A_0)\).  Thus we get \((-1)^d\) if~\(\selfmap\)
preserves orientation and \((-1)^{d+1}\) otherwise.  This
finishes the proof of Lemma~\ref{lem:combinatorial_mult} and
thus of Theorem~\ref{the:Lef_combinatorial}.

\section{Lefschetz invariants for smooth manifolds}
\label{sec:smooth_proof}

In this section, \(X\) is a complete Riemannian manifold.  The
action of~\(G\) on~\(X\) is not quite required to be proper,
but only isometric, or equivalently, the action should
\emph{factor} through a proper action.  (The equivalence of
these two conditions follows because the isometry group
of~\(X\) is a Lie group that acts properly on~\(X\)
provided~\(X\) has only finitely many connected components;
conversely, given a proper action of a Lie group by
diffeomorphisms, there is a complete Riemannian metric for
which the action is isometric.)  Such actions are those for
which Kasparov originally proved duality results in
\cite{Kasparov:Novikov}*{\S4}.  His construction uses Clifford
algebras and differential operators.  First we describe this
dual.  Then we compute the Lefschetz invariant of a suitable
self-map.  This is done once again by plugging all the
ingredients into~\eqref{eq:compute_Lef} and simplifying the
result.  To prepare for the main line of argument, we need a
formula for the result of twisting the Thom isomorphism by a
vector bundle automorphism.  This is the source of the line
bundle \(\sign(\ID_\nu-D_\nu\selfmap)\) in our computation.

\subsection{Description of the Clifford algebra dual}
\label{sec:Cliff_dual}

The complex Clifford algebras of the tangent spaces of~\(X\)
form a locally trivial bundle of finite-dimensional
\(\Z/2\)-graded \Cstar{}algebras \(\Cliff(T^*X)\).  Since~\(G\)
acts isometrically on~\(X\), we get an induced action on
\(\Cliff(T^*X)\) by grading preserving \Star{}algebra
automorphisms.  We let
\[
\dual\defeq \Gamma_0\bigl(X,\Cliff(T^*X)\bigr)
\]
be the \(\Z/2\)\nb-graded \(G\)-\(C^*\)-algebra of sections
of \(\Cliff (T^*X)\).

Let \(\Lambda \defeq \Lambda_\C^*(T^*X)\) be the complexified
exterior algebra bundle of \(T^*X\).  There is a canonical
isomorphism \(\Cliff (T^*X) \cong \Lambda\) that preserves the
grading, inner products, and the \(G\)\nb-action, but not the
algebra structure.  We let \(c \colon \Cliff (T^*X) \to
\operatorname{End}(\Lambda)\) be the resulting representation
by Clifford multiplication.

We describe this on the level of forms.  Let
\(\lambda_\omega\colon \Lambda\to\Lambda\) be the exterior
product with \(\omega\in\Lambda\).  Let~\(i_\omega\) denote the
interior product with~\(\omega\), that is,
\[
i_\omega (u_1\wedge \cdots \wedge u_k) \defeq
\sum_{j=1}^k (-1)^{j-1} \langle\omega,u_j\rangle u_1
\wedge \cdots \wedge \widehat{u_j} \wedge \cdots \wedge u_k.
\]
A simple calculation yields the graded commutator of these
operations:
\[
i_\omega\lambda_\tau + \lambda_\tau i_\omega
= \langle\omega,\tau\rangle.
\]
In particular, \(c(\omega) \defeq \lambda_\omega + i_\omega\)
satisfies \(c(\omega)^2 = \norm{\omega}^2\).  Since \(i_\omega
= \lambda_\omega^*\) as well, \(c\) defines a representation of
the Clifford algebra \(\Cliff(T^*X)\) on~\(\Lambda\).

Let~\(\textup{d}\) be the usual boundary map on differential
forms and let \(\deRham \defeq \textup{d}+\textup{d}^*\).  This
is a \(G\)\nb-equivariant self-adjoint, odd, elliptic
differential operator of order~\(1\), and it commutes with
\(c(\dual)\) up to bounded operators.  Now let \(\Hils\defeq
L^2(\Lambda)\); this is a \(\Z/2\)-graded Hilbert space with a
unitary representation of~\(G\), and~\(c\) yields a
grading-preserving, \(G\)\nb-equivariant \Star{}representation
of \(\dual\) on~\(\Hils\).  The operator~\(\deRham\) is
essentially self-adjoint, and \(M_f (1+\deRham^2)^{-1}\) is
compact for all \(f\in\CONT_0(X)\) because~\(X\) is complete.
Therefore,
\((\Hils,c,\deRham/(1+\deRham^2)^{\nicefrac{1}{2}})\) is a
Kasparov cycle for \(\KK^G_0(\dual,\UNIT)\), which we denote
by~\(D\).

The diagonal embedding \(X\to X\times X\) has a
\(G\)\nb-invariant tubular neighbourhood~\(U\) that is
\(G\)\nb-equivariantly diffeomorphic to the normal bundle
\(TX\) for the embedding.  We can choose the diffeomorphism
\(\Phi\colon TX\to U\) of the form
\[
(x,\xi)\mapsto
\bigl(x,\exp_x(\alpha_x(\norm{\xi})\cdot \xi)\bigr)
\]
for a function \(\alpha\colon X\times \R_{\ge0}\to \R\) that
takes care of a possibly finite injectivity radius.  It is
important that \(\pi_1\circ \Phi\) is the usual projection
\(TX\to X\).

Let \(J_U\subseteq \CONT_0(X)\otimes \dual\) be the
\(G\)\nb-invariant ideal of functions that vanish
outside~\(U\).  We view this as a \(G\)\nb-equivariant
Hilbert module over \(\CONT_0(X)\otimes\dual\) in the usual
way.  We let \(\CONT_0(X)\) act on~\(J_U\) by pointwise
multiplication: \((f_1\cdot f_2)(x,y) \defeq f_1(x)\cdot
f_2(x,y)\).  Identifying \(U\cong TX\), we get a canonical
section of the bundle underlying~\(J_U\), which associates to
\(\Phi(x,\xi)\in U\) the vector
\(\xi/(1+\abs{\xi}^2)^{\nicefrac{1}{2}} \in T_xX\) viewed as an
element of \(\Cliff(T_xX)\).  This defines a
\(G\)\nb-invariant, odd, self-adjoint multiplier~\(F\)
of~\(J_U\) with \(1-F^2\in J_U\).  Hence \((J_U,F)\) yields a
class \(\Theta\in\RKK^G_0\bigl(X;\UNIT,\dual\bigr)\).

Results in \cite{Kasparov:Novikov}*{\S4} show that
\((\dual,\Theta,D)\) as defined above is a Kasparov dual
for~\(X\).

\subsection{A twisted Thom isomorphism}
\label{sec:twisted_Thom}

The computations in this section explain how the line bundle
\(\sign(\ID-D_\nu\selfmap)\) appears in our Lefschetz formula.

Let~\(G\) be a locally compact group, let~\(X\) be a locally
compact proper \(G\)\nb-space, and let \(\pi\colon E\to X\)
be a \(G\)\nb-equivariant real vector bundle over~\(X\) with
\(G\)\nb-invariant inner product.  First we generalise the
construction of \(D\) and~\(\Theta\) above by working
fibrewise.  For each \(x\in X\), the fibre~\(E_x\) has
\(\CONT_0\bigl(E_x,\Cliff(E_x)\bigr)\) as a Kasparov dual, via
classes~\(D_x\) and \(\Theta_x\).  These combine to classes
\begin{align*}
  D_E^X&\in \cRKK^G_0\bigl(X;\CONT_0\bigl(E,\Cliff(E)\bigr),
  \CONT_0(X)\bigr),\\
  \Theta_E^X&\in \cRKK^G_0\Bigl(E;\CONT_0(X),
  \CONT_0\bigl(E,\Cliff(E)\bigr)\Bigr);
\end{align*}
here we tacitly pull back~\(E\) to a bundle on~\(E\) via
\(\pi\colon E\to X\) to form
\[
\dual_E^X \defeq \CONT_0\bigl(E,\Cliff(E)\bigr);
\]
this is a \(G\)\nb-\Cstar{}algebra over~\(E\).

More explicitly, \(D_E^X\) is the de Rham operator along the
fibres of~\(\pi\).  Since~\(E\) is already a vector bundle
over~\(X\), all of \(E\times_X E\) is a tubular neighbourhood
of the diagonal \(E\subseteq E\times_X E\).  Hence we can
simplify~\(\Theta_E^X\); the underlying Hilbert module is
simply
\[
\CONT_0\bigl(E\times_X E,\pi_2^*\Cliff(E)\bigr),
\]
where \(\pi_2\colon E\times_X E\to E\) is the second coordinate
projection.  The group~\(G\) acts in an obvious way, and
\(\CONT_0(E)\) acts by pointwise multiplication via
\(\pi_1\colon E\times_X E\to E\).  We define an essentially
unitary multiplier of \(\CONT_0\bigl(E\times_X
E,\pi_2^*\Cliff(E)\bigr)\), that is, a bounded continuous
section of \(\pi_2^*\Cliff(E)\), by
\[
F(\xi,\eta) \defeq
(1+\norm{\xi-\eta}^2)^{-\nicefrac{1}{2}} \cdot (\xi-\eta).
\]
This determines an operator~\(F\) on \(\CONT_0\bigl(E\times_X
E,\pi_2^*\Cliff(E)\bigr)\), and we get our cycle
\(\Theta^X_E\).  The triple \((\dual_E^X,D_E^X, \Theta_E^X)\)
is a \(G\)\nb-equivariant Kasparov dual for the space~\(E\)
over~\(X\) (see~\cite{Emerson-Meyer:Dualities} for this
relative notion of duality).  This means that we have canonical
isomorphisms
\[
\cRKK^G_*\bigl(E;\pi^*(A), \pi^*(B)\bigr)
\cong
\cRKK^G_*\bigl(X;\CONT_0\bigl(E,\Cliff(E)\bigr)\otimes_X A, B\bigr)
\]
for all \(G\ltimes X\)-\Cstar{}algebras \(A\) and~\(B\).
Actually, since the projection \(E\to X\) is a
\(G\)\nb-equivariant homotopy equivalence,
\[
\cRKK^G_*\bigl(E;\pi^*(A), \pi^*(B)\bigr) \cong
\cRKK^G_*(X;A, B).
\]
The Kasparov duality simply means that \(D_E^X\in
\cRKK^G_*\bigl(X;\dual_E^X,\CONT_0(X)\bigr)\) is invertible and
that the inverse is the element
\[
\tilde\Theta_E^X\in\cRKK^G_*(X;\CONT_0(X),\dual_E^X)
\]
that corresponds to~\(\Theta_E^X\).  Explicitly, the underlying
Hilbert module of~\(\tilde\Theta_E^X\) is~\(\dual_E^X\) with
the usual action of~\(G\) and the representation of
\(\CONT_0(X)\) by pointwise multiplication operators.  The
essentially unitary operator in the Kasparov cycle is the
multiplier~\(F_E^X\) of~\(\dual_E^X\) defined by
\[
E\ni (x,\xi)\mapsto
(1+\norm{\xi}^2)^{-\nicefrac{1}{2}} \xi \in E_x
\subseteq\Cliff(E_x).
\]
The proof that \(D_E^X\) and \(\tilde\Theta_E^X\) are inverse
to each other can be reduced to the case where~\(X\) is a point
using the same trick as in
\cite{LeGall:KK_groupoid}*{Th\'eor\`eme 7.4}.  (The Clifford
algebras allow a kind of Thom isomorphism even if~\(E\) is not
\(\K\)\nb-oriented.)

The following proposition is the entry point for the line
bundle \(\sign(f)\) in our Lefschetz computation.  It is a
refinement of the results in
\cite{Echterhoff-Emerson-Kim:Fixed}*{\S2}.

\begin{proposition}
  \label{pro:index_self-map}
  Let \(f\colon E\to E\) be a \(G\)\nb-equivariant
  isomorphism of vector bundles.  Define
  \[
  f^!\colon
  \CONT_0\bigl(E,\Cliff(E)\bigr) \to
  \CONT_0\bigl(E,\Cliff(E)\bigr),
  \qquad f^!(\varphi)(x,\xi) \defeq
  \varphi\bigl(x,f(\xi)\bigr)
  \]
  for \((x,\xi)\in E\), \(\varphi\in
  \CONT_0\bigl(E,\Cliff(E)\bigr) = \dual_E^X\).  This yields
  \([f^!]\in \cRKK^G_0(X;\dual_E^X,\dual_E^X)\).  The
  composition
  \[
  \CONT_0(X) \xrightarrow{\tilde\Theta_E^X}
  \dual_E^X \xrightarrow{[f^!]}
  \dual_E^X \xrightarrow{D_E^X}
  \CONT_0(X)
  \]
  is the class in \(\KK^G_0\bigl(\CONT_0(X),\CONT_0(X)\bigr)\)
  of the \(G\)\nb-equivariant \(\Z/2\)-graded line bundle
  \(\sign(f)\otimes_\R\C\) over~\(X\).
\end{proposition}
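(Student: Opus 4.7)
The plan is to reduce to an explicit computation in the spinor picture via two simplifications. \emph{First}, using the polar decomposition $f = T \circ (f^*f)^{\nicefrac{1}{2}}$, the straight-line homotopy from $\ID$ to $(f^*f)^{\nicefrac{1}{2}}$ inside the convex set of positive self-adjoint $G$\nb-equivariant automorphisms of $E$ yields a $G$\nb-equivariant homotopy from $f$ to the orthogonal automorphism $T$. Both sides of the proposed equality are homotopy invariant---the left by continuity of $f \mapsto [f^!]$, the right by Lemma~\ref{lem:sign_A_well-defined}---so we may assume $f$ is orthogonal. \emph{Secondly}, choose a $G$\nb-equivariant auxiliary bundle $F$ over $X$ (with possibly indefinite form) so that $E \oplus F$ has signature divisible by~$8$ and admits a $G$\nb-equivariant spinor bundle $(S, c)$. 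Additivity of $\sign$ combined with Example~\ref{exa:sign_ID} shows that the right-hand side is unchanged when $(E, f)$ is replaced by $(E \oplus F, f \oplus \ID_F)$. On the left, $(\ID_F)^!$ is the identity of $\dual_F^X$, and the Kasparov dual for $E \oplus F$ factors as an external product of those for $E$ and $F$ over $X$; since $D_F^X$ and $\tilde\Theta_F^X$ are mutually inverse, the composition for $(E \oplus F, f \oplus \ID_F)$ factors as the composition for $(E, f)$ tensored with $\ID_{\CONT_0(X)}$.

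In the stabilised situation, $c$ identifies $\Cliff(E \oplus F)$ with $\operatorname{End}(S)$, so $\dual_{E \oplus F}^X$ is Morita--Rieffel equivalent, over $X \times (E \oplus F)$, to $\CONT_0(E \oplus F)$ via the spinor bundle $S$. Under this Morita equivalence, $\tilde\Theta_{E \oplus F}^X$ becomes a Bott--Thom cycle on $S$ whose essentially unitary multiplier is $(x, \xi) \mapsto c(\xi)(1+\norm{\xi}^2)^{-\nicefrac{1}{2}}$, and $D_{E \oplus F}^X$ becomes a Dirac-type operator along the fibres of $E \oplus F \to X$ acting on $L^2$-sections of $S$; their composition is the identity on $\CONT_0(X)$, a Thom isomorphism in the spinor picture. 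Inserting $(f \oplus \ID_F)^!$ in the middle replaces the representation $c$ inside the Bott cycle by the twisted representation $c'(\xi, \eta) = c(T\xi, \eta)$, while the Dirac factor continues to act via $c$.

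The final step is to evaluate this mismatched Kasparov product. The intertwiner $L = \Hom_{\Cliff(E \oplus F)}((S, c'), (S, c)) = \sign(f)$ provides a $G$\nb-equivariant isomorphism $(S \otimes L, c') \cong (S, c)$; absorbing $L$ into the Bott side of the product converts $c'$ back into $c$, at the cost of tensoring the underlying bimodule of the composition by $L \otimes_\R \C$. The remaining matched Bott--Dirac product then contributes the identity, so the total composition is represented by the cycle with underlying graded bimodule $\sign(f) \otimes_\R \C$ and zero operator, which is the asserted class. The main obstacle will be this last identification: one must carefully check that no further contribution arises when the intertwiner is moved across the tensor product. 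This becomes transparent fibrewise, reducing to the case in which $X$ is a point, where the Kasparov product is elementary and the answer is visibly the graded character of the stabiliser on the intertwiner space; $G$\nb-equivariance and locality in $X$ then globalise this to the line bundle $\sign(f) \otimes_\R \C$.
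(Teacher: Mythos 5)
Your proposal is essentially correct in its key moves (polar decomposition to reduce to orthogonal $f$, twisting $\tilde\Theta$ by $f^!$, and absorbing the twist with the intertwiner line $L = \sign(f)$), but it takes a more roundabout route than the paper. The paper avoids stabilisation entirely: it observes that the Hilbert module underlying $\tilde\Theta_E^X$, namely $\dual_E^X$ with $\Cliff(E)$ acting by left multiplication, is \emph{already} the canonical spinor bundle $S = \Cliff(E)$ for $\Cliff(E \oplus E^-)$ --- the right regular representation (twisted by the grading) provides the missing $E^-$ factor. With that observation, the canonical isomorphism $\sign(f) \otimes_\R (S,c) \cong (S,c')$ works directly on the twisted Kasparov cycle $\tilde\Theta_E^X \otimes_{\dual_E^X} [f^!]$, giving $\tilde\Theta_E^X \otimes [f^!] = [\sign(f)] \otimes_{\CONT_0(X)} \tilde\Theta_E^X$ at once; composing with $D_E^X = (\tilde\Theta_E^X)^{-1}$ finishes the proof without ever introducing an auxiliary $F$, a Morita reduction, or a fibrewise limiting argument.

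Two concrete points you should address if you keep your stabilisation route. First, you allow $F$ to carry an indefinite form so that $E \oplus F$ admits a real spinor bundle, but you then invoke the Kasparov dual $(\dual_F^X, D_F^X, \tilde\Theta_F^X)$, whose construction in \S\ref{sec:twisted_Thom} presupposes a Euclidean (positive definite) metric. The reconciliation is that the \emph{complex} Clifford algebra does not see the signature, so the complexified spinor bundle $S_\C$ still implements the Morita equivalence $\dual_{E\oplus F}^X \sim \CONT_0(E\oplus F)$ used by the Kasparov dual; but this should be said explicitly, since a positive definite $F$ with $E\oplus F$ spin need not exist. (Choosing $F = E^-$ sidesteps the existence question and is what the paper implicitly does.) Secondly, your final paragraph defers the crucial verification --- that absorbing $L$ into the Bott side contributes exactly $L\otimes_\R\C$ and nothing more --- to a fibrewise reduction; the paper instead checks globally that the isomorphism~\eqref{canonicaliso} respects the right Clifford multiplication (because the grading is built from left multiplication), which is the precise statement needed to identify the bimodules. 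In short: your argument is sound once these two points are tightened, but the extra machinery (stabilisation, Morita equivalence, fibrewise reduction) can be avoided by recognising that the module underlying $\tilde\Theta_E^X$ is itself the relevant spinor bundle.
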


\begin{proof}
  Since~\(f\) is homotopic to the isometry in its polar
  decomposition, we may assume that~\(f\) itself is isometric.

  The Kasparov product of \(\tilde\Theta_E^X\) and~\(f^!\) is
  easy to compute because the latter is a \Star{}homomorphism.
  We get~\(\dual_E^X\) with the usual action of~\(G\) and of
  \(\CONT_0(X)\) by pointwise multiplication -- because
  \(\pi\circ f=\pi\colon E\to X\).  The multiplier~\(F_E^X\)
  above is changed, however, to
  \[
  (x,\xi)\mapsto
  f(\xi)\cdot (1+\norm{f(\xi)}^2)^{-\nicefrac{1}{2}} =
  f(\xi)\cdot (1+\norm{\xi}^2)^{-\nicefrac{1}{2}}.
  \]

  The construction of \(\tilde{\Theta}_E^{X}\) uses the left
  regular representation \(\Cliff (E) \to
  \operatorname{End}(S)\), where \(S= \Cliff (E)\), of the
  Clifford algebra on itself.  The left regular representation
  can be extended to an \emph{irreducible} representation
  \(c\colon \Cliff(E\oplus E^{-}) \cong \Cliff (E)\hat{\otimes}
  \Cliff (E) \to \operatorname{End}(S)\) of \(\Cliff(E\oplus
  E^-)\), where~\(E^-\) denotes~\(E\) with bilinear form
  negated.  That is, \(S\) has the structure of a spinor bundle
  over \(\Cliff(E\oplus E^{-})\).  To see this, we combine the
  right regular representation of~\(E\), which commutes with
  the left regular one, with the grading~\(\gamma\);
  since~\(E\) is contained in the odd part of \(\Cliff(E)\),
  the two maps
  \[
  \Cliff(E)\rightrightarrows \Cliff(E),
  \qquad
  a\mapsto x\cdot a,
  \quad
  a\mapsto \gamma a\gamma \cdot y,
  \]
  for \(x,y\in E\) anti-commute, and the square of the latter
  maps~\(a\) to \(-a\cdot y^2 = -a\norm{y}^2\). The fact that
  we get an irreducible representation follows by counting
  dimensions.

  We use the spinor bundle \(S = \Cliff(E)\) for \(E\oplus
  E^-\) just constructed to compute the line bundle
  \(\sign(f)\).  The point of \(\sign (f)\) is that we have a
  canonical equivariant isomorphism of complex vector bundles
  \begin{equation}
    \label{canonicaliso}
    \sign (A) \otimes_\R (S,c) \cong (S,c'),
    \qquad \varphi\otimes a \mapsto \varphi(a),
  \end{equation}
  where~\(c'\) as usual denotes the Clifford multiplication
  twisted by \(f\oplus \ID\).  This isomorphism is equivariant
  with respect to the ordinary (untwisted) Clifford
  multiplication \((x,y) \mapsto c(x,y) \otimes \ID\) on its
  domain, and the twisted action \((x,y) \mapsto c'(x,y) =
  c(f(x), y)\) on its co-domain.  It is easy to check that the
  isomorphism~\eqref{canonicaliso} respects ordinary right
  Clifford multiplication (because the grading~\(\gamma\) can
  be built out of left Clifford multiplication) whence we have
  a \(G\)\nb-equivariant isomorphism of right
  \(\CONT_0\bigl(X, \Cliff (E)\bigr)\)-modules respecting left
  Clifford multiplication.  This shows that the twisted
  Clifford multiplication that appears in \(\tilde\Theta_E^X
  \otimes_{\dual_E^X} [f^!]\) is isomorphic to the standard
  Clifford multiplication on \(\sign(f)\otimes \Cliff(E)\).
  Thus
  \[
  \tilde\Theta_E^X \otimes_{\dual_E^X} [f^!] =
  [\sign(f)] \otimes_{\CONT_0(X)} \tilde\Theta_E^X.
  \]
  Since \(D_E^X=(\tilde\Theta_E^X)^{-1}\) by the untwisted Thom
  isomorphism, the product with \(D_E^X\) yields the class of
  \(\sign(f)\) in
  \(\cRKK^G_0\bigl(X;\CONT_0(X),\CONT_0(X)\bigr)\).
\end{proof}

\subsection{Computing the Lefschetz invariant}
\label{sec:smooth_Lefschetz}

Now we compute the Lefschetz map for a smooth
\(G\)\nb-equivariant map \(\selfmap\colon X\to X\) that
satisfies the prerequisites of Theorem~\ref{the:Lef_smooth},
using the Kasparov dual involving Clifford algebras described
above.

Throughout this section, we let \(Y\defeq \Fix(\selfmap)\);
this is a closed submanifold of~\(X\) by assumption.
Let~\(\nu\) be its normal bundle; this is a vector bundle
over~\(Y\) via the projection map \(\pi\colon \nu\to Y\).
Since~\(Y\) is a closed submanifold, it has a tubular
neighbourhood~\(V\), that is, \(V\cong\nu\) via a
\(G\)\nb-equivariant diffeomorphism whose restriction
to~\(Y\) is the zero section of~\(\nu\).  Extending functions
by~\(0\) outside~\(V\), we get canonical embeddings such as
\[
j\colon \CONT_0(\nu)\to \CONT_0(X),
\qquad
j\colon \CONT_0\bigl(\nu,\Cliff(TX|_\nu)\bigr)\to
\CONT_0(X,\Cliff(TX)\bigr) = \dual.
\]

Recall that
\[
\Lef(\selfmap)
= \overline{\Theta} \otimes_{\CONT_0(X)\otimes\dual}
[\mu_\selfmap]\otimes_\dual D.
\]
Lemma~\ref{lem:compute_T_on_map} shows that
\(\mu_\selfmap\colon \CONT_0(X,\dual)\cong
\CONT_0(X)\otimes\dual\to\dual\) is given by
\[
\mu_\selfmap(\varphi)(x) = \varphi(\selfmap(x),x)
\in \Cliff(T_xX)
\qquad
\text{for all \(\varphi\in\CONT_0(X,\dual)\), \(x\in X\).}
\]
We are going to compose this \Star{}homomorphism
with~\(\overline{\Theta}\) and simplify the result:

\begin{lemma}
  \label{lem:simplify_Theta_mu_selfmap}
  The composition \(\overline{\Theta}
  \otimes_{\CONT_0(X)\otimes\dual} [\mu_\selfmap] \in
  \KK^G_0(\CONT_0(X),\dual)\) is equal to the composite
  \begin{multline*}
    \CONT_0(X) \xrightarrow{r_Y}
    \CONT_0(Y) \xrightarrow{\sign(\ID_\nu-D_\nu\selfmap)}
    \CONT_0(Y) \xrightarrow{\tilde\Theta_\nu^Y}
    \CONT_0\bigl(\nu,\Cliff(\nu)\bigr) \\\xrightarrow{i}
    \CONT_0\bigl(\nu,\Cliff(TX|_\nu)\bigr) \xrightarrow{j}
    \CONT_0\bigl(X,\Cliff(TX)\bigr) =
    \dual
  \end{multline*}
  Here~\(r_Y\) is the restriction map;
  \(\sign(\ID_\nu-D_\nu\selfmap) \in
  \KK^G_0\bigl(\CONT_0(Y),\CONT_0(Y)\bigr)\) is the class
  associated to the corresponding line bundle;
  \(\tilde\Theta_\nu^Y\) is as in
  \textup{\S\ref{sec:twisted_Thom}}; the map~\(i\) is induced
  by the embedding
  \[
  \Cliff(\nu) \to \Cliff(\nu)\otimes\Cliff(TY) \cong
  \Cliff(\nu\oplus TY) \cong \Cliff(TX|_\nu),
  \]
  where the first map uses the unit element in \(\Cliff(TY)\);
  and~\(j\) is induced by the embedding of~\(\nu\) in~\(X\).
\end{lemma}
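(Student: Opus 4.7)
The plan is to localize the cycle $\overline\Theta \otimes_{\CONT_0(X)\otimes\dual} [\mu_\selfmap]$ to a tubular neighborhood of the fixed point submanifold $Y$ and then, after fibrewise analysis, recognize it as the Kasparov product described in the lemma. Identify $\CONT_0(X)\otimes\dual$ with $\CONT_0\bigl(X\times X,\pi_2^*\Cliff(T^*X)\bigr)$, so that $\mu_\selfmap$ corresponds to pullback along the graph map $x\mapsto(\selfmap(x),x)$. The underlying Hilbert module of the Kasparov product is then supported on $\{x\in X\mid (\selfmap(x),x)\in U\}$, which is a neighborhood of $Y$. Shrinking $U$ by a homotopy of tubular neighborhoods (which changes nothing in $\KK^G$), we may assume this preimage is contained in our fixed tubular neighborhood $V\cong\nu$ of $Y$.

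Next, factor the $\CONT_0(X)$-action through $r_Y$ and linearise. In normal coordinates $x=\exp_{y_0}(\xi_0)$ with $y_0\in Y$ and $\xi_0\in\nu_{y_0}$, the hypothesis that $D\selfmap$ fixes only vectors tangent to $Y$ forces $D\selfmap\vert_{TY}=\ID_{TY}$ and makes $A\defeq\ID_\nu-D_\nu\selfmap$ invertible. A straight-line rescaling homotopy in the $\xi_0$ variable lets us replace the $\CONT_0(X)$-action on the localised module by one that factors as $\CONT_0(X)\xrightarrow{r_Y}\CONT_0(Y)\xrightarrow{\pi^*}\CONT_0(V)$. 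Under the identification $\Phi^{-1}$ of $U$ with $TX$, the pair $(\selfmap(x),x)$ maps to $(\selfmap(x),\xi)$ where $\xi\in T_{\selfmap(x)}X$ is the $\exp$-preimage of $x$ from $\selfmap(x)$, so to first order
\[
\xi \;\approx\; x-\selfmap(x) \;\approx\; \xi_0-D\selfmap(\xi_0) \;=\; A(\xi_0),
\]
which lies in the normal direction (the tangential part vanishes because $D\selfmap|_{TY}=\ID_{TY}$). Hence the essentially unitary multiplier $F\circ\mu_\selfmap$ is homotopic, through essentially unitary multipliers, to the multiplier $\xi_0\mapsto A(\xi_0)/\bigl(1+\norm{A(\xi_0)}^2\bigr)^{\nicefrac{1}{2}}$, taking values in $\Cliff(\nu)\subseteq \Cliff(TX|_\nu)$; the higher-order corrections are absorbed by an affine homotopy which remains invertible at infinity thanks to the invertibility of $A$.

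After these reductions, the localised cycle is isomorphic, as a cycle from $\CONT_0(Y)$ to $\CONT_0\bigl(\nu,\Cliff(\nu)\bigr)$, to the Kasparov product $\tilde\Theta_\nu^Y\otimes_{\dual_\nu^Y}[A^!]$ of Proposition~\ref{pro:index_self-map}. The computation in the proof of that proposition gives the intermediate identity $\tilde\Theta_\nu^Y\otimes[A^!] = [\sign(A)]\otimes_{\CONT_0(Y)}\tilde\Theta_\nu^Y$, and the inclusions $\CONT_0(\nu,\Cliff(\nu))\xrightarrow{i}\CONT_0(\nu,\Cliff(TX|_\nu))\xrightarrow{j}\dual$ account for extending the Clifford algebra using the unit of $\Cliff(TY)$ and extending sections by zero outside $V$. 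Assembling the pieces yields exactly the composite in the lemma. The main obstacle is Step~2: one must check that the deformation from the curved map $\xi_0\mapsto \Phi^{-1}(\selfmap(x),x)_2$ to its linearisation $A(\xi_0)$ passes through Kasparov cycles whose multipliers remain essentially unitary modulo the compacts of the Hilbert module, which is precisely where the invertibility hypothesis on $\ID_\nu-D_\nu\selfmap$ is used.
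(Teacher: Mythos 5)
Your proposal follows essentially the same line as the paper's proof: restrict the $\overline{\Theta}$-cycle to the graph of $\selfmap$, shrink $U$ so the resulting support set sits inside a tubular neighbourhood $V\cong\nu$ of $Y$, factor the $\CONT_0(X)$-action through $r_Y$ via the retraction $\pi$, linearise the essentially unitary multiplier by a homotopy (invertible off $Y$ because $\ID_\nu-D_\nu\selfmap$ is invertible) to $\xi\mapsto(\ID_\nu-D_\nu\selfmap)(\xi)$, and then invoke Proposition~\ref{pro:index_self-map}. The only cosmetic difference is that you phrase the linearisation in normal coordinates and appeal to $D\selfmap|_{TY}=\ID_{TY}$, whereas the paper writes down the linear homotopy $tF'+(1-t)\tilde F$ directly; both are the same deformation, controlled by the same invertibility hypothesis.
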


\begin{proof}
  We have described~\(\Theta\) and hence~\(\overline{\Theta}\)
  by an explicit cycle.  To get
  \[
  \overline{\Theta}\otimes_{\CONT_0(X,\dual)} [\mu_\selfmap]
  \in \KK^G_0(\CONT_0(X),\dual),
  \]
  we restrict this cycle to the graph of~\(\selfmap\).  Recall
  that~\(\Theta\) is supported in a certain
  \(G\)\nb-invariant open neighbourhood~\(U\) of the diagonal
  of~\(X\) in \(X\times X\).  We let
  \[
  U'\defeq \{x\in X\mid (x,\selfmap x)\in U\};
  \]
  this is a \(G\)\nb-invariant neighbourhood of the fixed
  point submanifold \(Y\defeq \Fix(\selfmap)\).  Restriction to
  the graph of~\(\selfmap\) replaces~\(J_U\) by the ideal
  \[
  J_U' \defeq \bigl\{f\in\CONT_0\bigl(X,\Cliff(TX)\bigr) \bigm|
  \text{\(f(x) = 0\) unless \(\bigl(x,\selfmap(x)\bigr)\in
    U\)}\bigr\}
  \]
  in~\(\dual\).  The group~\(G\) acts on~\(J_U'\) in the
  obvious way, and \(\CONT_0(X)\) acts on~\(J_U'\) by pointwise
  multiplication: \((f\cdot\varphi)(x)\defeq f(x)\cdot
  \varphi(x)\).  The multiplier~\(F\) described in
  \S\ref{sec:Cliff_dual} yields the multiplier
  \[
  F'(x) \defeq F(x,\selfmap x) \in T_xX \subseteq
  \Cliff(T_xX),
  \]
  where \(F(x,\selfmap x)\) is the pre-image of~\(\selfmap(x)\)
  under a suitably rescaled exponential map at~\(x\).

  In the construction of the Kasparov dual for~\(X\), we may
  choose~\(U\) to be an arbitrarily small neighbourhood of the
  diagonal.  For a suitable choice of~\(U\), the
  neighbourhood~\(U'\) of~\(Y\) will be contained in~\(V\), a
  tubular neighbourhood around~\(Y\).  We assume this from now
  on.

  Let \(\pi\colon V\cong \nu\to Y\) be the retraction from the
  Tubular Neighbourhood Theorem.  Since this is a
  \(G\)\nb-equivariant deformation retraction, pointwise
  multiplication by \(f(x)\) and \(f\circ\pi(x)\) is
  \(G\)\nb-equivariantly homotopic.  Therefore, we may
  replace the action of \(\CONT_0(X)\) on~\(J_U'\) by the one
  of pointwise multiplication with \(f\circ\pi\) for
  \(f\in\CONT_0(X)\).  This factors
  \(\overline{\Theta}\otimes_{\CONT_0(X,\dual)}
  [\mu_\selfmap]\) through the restriction map
  \(\CONT_0(X)\to\CONT_0(Y)\).

  Equip~\(\nu\) with some Euclidean inner product and transport
  the resulting norm to~\(V\) via the diffeomorphism
  \(V\cong\nu\).  Since~\(U\) is a neighbourhood of the
  diagonal, \(U'\) is a neighbourhood of~\(Y\) in~\(X\).  Let
  \(\varrho\colon Y\to\R_{>0}\) be a \(G\)\nb-invariant
  function with \(x\in U'\) for all \(x\in V\) with
  \(\norm{x}\le\varrho\circ\pi(x)\).  Since
  \(F\bigl(x,\selfmap(x)\bigr)\) does not vanish unless \(x\in
  Y\), we can rescale~\(F'\) such that \(F'(x)^2=1\) for all
  \(x\in U'\) with \(\norm{x}\ge\varrho\circ\pi(y)\).  This
  yields a homotopic cycle.  Now we may restrict~\(J_U'\) to
  the smaller ideal of elements of~\(\dual\) supported in
  \[
  U'' \defeq \{x\in V\mid \norm{x}< \varrho\circ\pi(x)\}
  \]
  because the operator~\(F'\) has become unitary on the
  complement, resulting in our Kasparov cycle being degenerate
  there.  This neighbourhood is another tubular neighbourhood
  of~\(Y\) by a rescaling.  Changing our tubular neighbourhood,
  we can therefore achieve that~\(\varrho\) becomes the
  constant function~\(1\) to simplify.

  We define another function
  \[
  \tilde{F}\colon V\cong\nu\to TX,
  \qquad
  \tilde{F}(y,\xi) \defeq (\ID-D\selfmap)(\xi)
  \]
  for \(y\in Y\), \(\xi\in\nu_y\); here we use some
  \(G\)\nb-equivariant section for the quotient map
  \(TX\prto\nu\).  We join~\(\tilde{F}\) and~\(F'\) by the
  linear homotopy \(tF'+(1-t)\tilde{F}\).  Since
  \(\selfmap(x)\approx x+D\selfmap(x)\) for~\(x\) near~\(Y\),
  there is a neighbourhood~\(\tilde{V}\) of~\(Y\) such that
  \(tF'+(1-t)\tilde{F}\) is invertible on \(\tilde{V}\setminus
  Y\).  Rescaling~\(\tilde{F}\) and~\(F'\) first, so that they
  become unitary outside~\(\tilde{V}\), and also rescaling the
  above homotopy, we connect our Kasparov cycle to
  \((\dual|_{\tilde{V}},\tilde{F})\), with \(\CONT_0(X)\)
  acting by pointwise multiplication combined with~\(\pi\):
  \((f_1\cdot f_2)(x) = f_1\circ\pi(x)\cdot f_2(x)\).

  Our computation so far shows that the cycle that defines
  \(\overline{\Theta} \otimes_{\CONT_0(X)\otimes\dual}
  [\mu_\selfmap]\) is homotopic to \(r_Y\otimes_{\CONT_0(Y)}
  \tilde\Theta_\nu^Y \otimes_{\CONT_0(\nu,\Cliff(\nu))}
  [(\ID_\nu-D_\nu\selfmap)^!]
  \otimes_{\CONT_0(\nu,\Cliff(\nu))} (j\circ i)\) -- the
  changes in our choice of the tubular neighbourhood above do
  not matter.  Finally, Proposition~\ref{pro:index_self-map}
  yields
  \[
  \tilde\Theta_\nu^Y \otimes_{\CONT_0(\nu,\Cliff(\nu))}
  [(\ID_\nu-D_\nu\selfmap)^!] =
  \sign(\ID_\nu-D_\nu\selfmap) \otimes_{\CONT_0(Y)}
  \tilde\Theta_\nu^Y
  \]
  because \(\tilde\Theta_\nu^Y\) and~\(D_\nu^Y\) are inverse to
  each other.
\end{proof}

To get \(\Lef(\selfmap)\), we must compose the product computed
in Lemma~\ref{lem:simplify_Theta_mu_selfmap} with
\(D\in\KK^G_0(\dual,\UNIT)\).  To begin with, we compose~\(D\)
with the class of the \Star{}homomorphism \(j\circ i\colon
\CONT_0\bigl(\nu,\Cliff(\nu)\bigr) \to \dual\).  This yields
the class of the operator~\(\deRham\) on the space of sections
\(L^2\bigl( \Lambda^*_\C (T^*X)\bigr)\) over~\(X\) of the
bundle \(\Lambda^*_\C (T^*X)\), with
\(\CONT_0\bigl(\nu,\Cliff(\nu)\bigr)\) acting by left
multiplication; here we extend such functions by~\(0\) outside
\(\nu\cong V\) and use the embedding \(\Cliff(\nu)\to
\Cliff(TX|_\nu)\).  This Kasparov cycle is highly degenerate:
we may restrict to the subspace of differential forms in
\(L^2\bigl( \Lambda^*_\C (T^*X)\bigr)\) that vanish
outside~\(\nu\); the result is isomorphic to the bundle of
forms on~\(\nu\) with respect to a \emph{complete} Riemannian
metric on~\(\nu\).  The restriction of~\(\deRham\) to
differential forms on~\(X\) which are supported in~\(\nu\) is
homotopic to the Euler operator for~\(\nu\) because both
operators are pseudodifferential and have the same principal
symbol, up to the isomorphism involved in changing the metric.
Thus we now want to compose the Euler operator on~\(\nu\) with
\(\tilde{\Theta}_Y^\nu\).

We may split the class of the Euler operator on~\(\nu\) as a
Kasparov product of the class \(D_\nu^Y\in
\KK^G_0\bigl(\CONT_0\bigl(\nu,\Cliff(\nu)\bigr),\CONT_0(Y)\bigr)\)
and the class in \(\KK^G_0(\CONT_0(Y),\UNIT)\) of the Euler
operator for~\(Y\):
\[
(ji)^*(D) = D_\nu^Y \otimes_{\CONT_0(Y)} \Eul_Y.
\]
This is proved like the corresponding assertion for Dirac
operators and amounts again to the corresponding fact about the
symbols.  Finally, the class \(D_\nu^Y\) cancels
\(\tilde\Theta_\nu^Y\) in
Lemma~\ref{lem:simplify_Theta_mu_selfmap} and yields
\(\Lef(\selfmap) = r_Y \otimes_{\CONT_0(Y)} [\sign(\ID_\nu -
D_\nu\selfmap)] \otimes_{\CONT_0(Y)} \Eul_Y\) as asserted.
This finishes the proof of Theorem~\ref{the:Lef_smooth}.

\section{Conclusion and outlook}
\label{sec:conclusion}

We have used duality in bivariant \(\KK\)\nb-theory to refine
the Lefschetz number of a self-map to an equivariant
\(\K\)\nb-homology class, and we have computed this invariant
for suitable self-maps of simplicial complexes and smooth
manifolds.  In both cases, the Lefschetz invariant only sees a
small neighbourhood of the fixed point subset.

In the simplicial case, the equivariant Lefschetz invariant is
a \(0\)\nb-dimensional object in the sense that it is a
difference of two equivariant \Star{}homomorphisms to a
\(C^*\)\nb-algebra of compact operators.  This is a special
feature of Lefschetz invariants of self-maps.  Therefore, it is
interesting to extend the computation of the Lefschetz
invariant to more general classes in
\(\KK^G_*\bigl(\CONT_0(X),\CONT_0(X)\bigr)\) or even
\(\RKK^G_*(X;\CONT_0(X),\C)\).  Since the Lefschetz map is a
split surjection on the latter group, this will necessarily
lead to more complicated \(\K\)\nb-homology classes.

A geometric computation of the Lefschetz map in this case
requires descriptions of the relevant Kasparov groups in terms
of geometric cycles, and the use of a dual that is appropriate
to this situation.  This geometric computation of the Lefschetz
invariant will be the subject of a forthcoming article.

\begin{bibdiv}
\begin{biblist}
\bib{Baum-Block:Bicycles}{article}{
  author={Baum, Paul},
  author={Block, Jonathan},
  title={Equivariant bicycles on singular spaces},
  language={English, with French summary},
  journal={C. R. Acad. Sci. Paris S\'er. I Math.},
  volume={311},
  date={1990},
  number={2},
  pages={115--120},
  issn={0764-4442},
  review={\MRref {1065441}{92b:19003}},
}

\bib{Baum-Douglas:K-homology}{article}{
  author={Baum, Paul},
  author={Douglas, Ronald G.},
  title={\(K\)-Homology and index theory},
  conference={ title={Operator algebras and applications, Part I}, address={Kingston, Ont.}, date={1980}, },
  book={ series={Proc. Sympos. Pure Math.}, volume={38}, publisher={Amer. Math. Soc.}, place={Providence, R.I.}, },
  date={1982},
  pages={117--173},
  review={\MRref {679698}{84d:58075}},
}

\bib{Connes-Skandalis:Longitudinal}{article}{
  author={Connes, Alain},
  author={Skandalis, Georges},
  title={The longitudinal index theorem for foliations},
  journal={Publ. Res. Inst. Math. Sci.},
  volume={20},
  date={1984},
  number={6},
  pages={1139--1183},
  issn={0034-5318},
  review={\MRref {775126}{87h:58209}},
}

\bib{Echterhoff-Emerson-Kim:Fixed}{article}{
  author={Echterhoff, Siegfried},
  author={Emerson, Heath},
  author={Kim, Hyun-Jeong},
  title={Fixed point formulas for proper actions},
  date={2007},
  note={\arxiv {0708.4279}},
  status={to appear},
}

\bib{Emerson-Meyer:Euler}{article}{
  author={Emerson, Heath},
  author={Meyer, Ralf},
  title={Euler characteristics and Gysin sequences for group actions on boundaries},
  journal={Math. Ann.},
  volume={334},
  date={2006},
  number={4},
  pages={853--904},
  issn={0025-5831},
  review={\MRref {2209260}{2007b:19006}},
}

\bib{Emerson-Meyer:Dualities}{article}{
  author={Emerson, Heath},
  author={Meyer, Ralf},
  title={Dualities in equivariant Kasparov theory},
  date={2007},
  note={\arxiv {0711.0025}},
  status={eprint},
}

\bib{Illman:Equivariant_triangulations}{article}{
  author={Illman, S\"oren},
  title={Existence and uniqueness of equivariant triangulations of smooth proper \(G\)\nobreakdash -manifolds with some applications to equivariant Whitehead torsion},
  journal={J. Reine Angew. Math.},
  volume={524},
  date={2000},
  pages={129--183},
  issn={0075-4102},
  review={\MRref {1770606}{2001j:57032}},
}

\bib{Kasparov:Novikov}{article}{
  author={Kasparov, Gennadi G.},
  title={Equivariant \(KK\)-theory and the Novikov conjecture},
  journal={Invent. Math.},
  volume={91},
  date={1988},
  number={1},
  pages={147--201},
  issn={0020-9910},
  review={\MRref {918241}{88j:58123}},
}

\bib{Kasparov-Skandalis:Buildings}{article}{
  author={Kasparov, Gennadi G.},
  author={Skandalis, Georges},
  title={Groups acting on buildings, operator \(K\)\nobreakdash -theory, and Novikov's conjecture},
  journal={\(K\)\nobreakdash -Theory},
  volume={4},
  date={1991},
  number={4},
  pages={303--337},
  issn={0920-3036},
  review={\MRref {1115824}{92h:19009}},
}

\bib{Lueck-Rosenberg:Lefschetz}{article}{
  author={L\"uck, Wolfgang},
  author={Rosenberg, Jonathan},
  title={The equivariant Lefschetz fixed point theorem for proper cocompact \(G\)\nobreakdash -manifolds},
  conference={ title={High-dimensional manifold topology}, },
  book={ publisher={World Sci. Publ., River Edge, NJ}, },
  date={2003},
  pages={322--361},
  review={\MRref {2048727}{2005b:57064}},
}

\bib{LeGall:KK_groupoid}{article}{
  author={Le Gall, Pierre-Yves},
  title={Th\'eorie de Kasparov \'equivariante et groupo\"\i des. I},
  language={French, with English and French summaries},
  journal={\(K\)\nobreakdash -Theory},
  volume={16},
  date={1999},
  number={4},
  pages={361--390},
  issn={0920-3036},
  review={\MRref {1686846}{2000f:19006}},
}

\bib{Weber:Universal_Lefschetz}{article}{
  author={Weber, Julia},
  title={The universal functorial equivariant Lefschetz invariant},
  journal={\(K\)\nobreakdash -Theory},
  volume={36},
  year={2006},
  number={1--2},
  pages={169--207},
  issn={0920-3036},
  review={\MRref {2274162}{2007h:18019}},
}

\end{biblist}
\end{bibdiv}

\end{document}